\newif\ifsiam
\newif\ifnummat
     \journalname{Computing and Visualization in Science}
\definecolor{dark}{gray}{0.6}
\definecolor{light}{gray}{0.8}
\def\eref#1{(\ref{#1})}
\def\Re{\mathbb R}
\def\mmAuthor{Maximilian S.~Metti}
    \def\mmShortAuthor{M.~S.~Metti}
    \def\mmShortAuthor{Metti}
\def\jxAuthor{Jinchao Xu}
    \def\jxShortAuthor{J.~Xu}
    \def\jxShortAuthor{Xu}
\def\clAuthor{Chun Liu}
    \def\clShortAuthor{C.~Liu}
    \def\clShortAuthor{Liu}
\def\jxAddress{Department of Mathematics, The Pennsylvania State University,
 University Park, Pennsylvania 16802. Email: xu@math.psu.edu.}
 \def\clAddress{Department of Mathematics, The Pennsylvania State University,
 University Park, Pennsylvania 16802. Email: cxl41@psu.edu.}
 \def\mmAddress{Department of Mathematics, The Pennsylvania State University,
 University Park, Pennsylvania 16802. Email: msm37@psu.edu.}
\def\jxThanks{The work of this author was supported by xxxxxxxxxxxxx.}
\def\clThanks{The work of this author was supported by xxxxxxxxxxxxx.}
\def\mmThanks{The work of this author was supported by xxxxxxxxxxxxx.}
\title{Energetically stable discretizations for charge carrier transport and electrokinetic models}
\def\shortTitle{Finite element analysis for PNP}
\def\myKeywords{Finite elements, Poisson-Nernst-Planck, stability analysis, energy estimate}
\def\myAMS{65M15, 65M50, 65M60}
\def\myAbstract{
A finite element discretization using a method of lines approached is proposed for approximately solving the Poisson-Nernst-Planck (PNP) equations.
This discretization scheme enforces positivity of the computed solutions, corresponding to particle density functions,
and a discrete energy estimate is established that resembles the familiar energy law for the PNP system.
This energy estimate is extended to finite element solutions to an electrokinetic model, which couples the PNP system with the Navier-Stokes equations.
%A numerical solver based on Newton iteration is described for the nonlinear problem and an efficient linear solver is proposed for the linearized problem.
Numerical experiments are conducted to validate convergence of the computed solution and verify the discrete energy estimate.
}
\begin{document}

%%%%%%%%%%%%%%%%%%%%% SIAM Journal Macros %%%%%%%%%%%%%%%%%%%%%%%%%%%%%%%%%%%%

\ifsiam
  \author{\clAuthor%
         \thanks{\clAddress\ \clThanks}
         \and
         \mmAuthor%
         \thanks{\mmAddress\ \mmThanks}
         \and
         \jxAuthor%
         \thanks{\jxAddress\ \jxThanks}
         }
  \maketitle

  \begin{abstract}\myAbstract\end{abstract}
  \begin{keywords}\myKeywords\end{keywords}
  \begin{AMS}\myAMS\end{AMS}
  \pagestyle{myheadings}
  \thispagestyle{plain}
  \markboth{\clShortAuthor,\ \mmShortAuthor,\ and \jxShortAuthor }{\shortTitle}

\else

%%%%%%%%%%%%%%%%%%%%% Springer Journal Macros %%%%%%%%%%%%%%%%%%%%%%%%%%%%%%%%

  \ifnummat
      \author{\clAuthor%
             \thanks{\clThanks}
             \and
             \mmAuthor%
             \thanks{\mmThanks}
             \and
             \jxAuthor%
             \thanks{\jxThanks}
              }
  \else
      \author{\clAuthor%
             \thanks{\clThanks}
             \and
             \mmAuthor%
             \thanks{\mmThanks}
             \and
             \jxAuthor%
             \thanks{\jxThanks}
              }

  \fi
  \institute{\clShortAuthor : \clAddress \\ \mmShortAuthor : \mmAddress \\  \\ \jxShortAuthor : \jxAddress}
  \date{Received: \today\  / Accepted: date}
  \maketitle
  \begin{abstract}\myAbstract\end{abstract}
  \begin{keywords}\myKeywords\end{keywords}
  \begin{subclass}\myAMS \end{subclass}
\fi

%%%%%%%%%%%%%%%%%%%%%%%%%%%%%%%%%%%%%%%%%%%%%%%%%%%%%%%%%%%%%%%%%%%%%%%%%%%%%%

%%%%%%%%%%%%%%%%%%%%%%%%%%%%%%%% input files %%%%%%%%%%%%%%%%%%%%%%%%%%%%%%%%%

%!TEX root = ./finite-element-approach-pnp.tex

\section{Introduction and Background}
Charge transport refers to any physical process where charged particles interact through an electric field and are driven by an electromagnetic force in some way.
These systems have been observed throughout the history of science; 
they are, naturally, the foundation of electric engineering; and they are commonplace in everyday devices and physical systems, 
from mobile phones to solar-cell batteries, and weather to biology.
The relevant quantities and their relationships are often modeled mathematically by the Maxwell equations, which were first published in \cite{maxwell}.
To this day, phenomena relating to the transport and interaction of charged particles provides a broad variety of research topics in the physical sciences, mathematics, 
and engineering.

In our study, we focus on electrostatic systems, where magnetic forces are absent.
Such systems arise in biological settings, studying semiconductor devices, and electrokinetic systems, where charged particles interact with charged fluids,
to name a few examples.
Our model for charge transport is described by the Poisson-Nernst-Planck (PNP) system of differential equations.
The electric field is defined by Gauss' law in Maxwell's equations,
and the flux of the charged particles are driven by processes of diffusion and drift from the electrostatic force,
which traces back to Nernst \cite{nernst} and Planck \cite{planck}.
This model is valid for systems where charge carriers can be accurately modeled as point charges.

These equations serve as the basis for modeling many devices, such as batteries \cite{batterydft14,batteryxu02}, semiconductor devices 
\cite{BRF83,BBFS89,Jerome85,Jerome96,mock,slotboom73},
fluidic micro/nano-channels and mixers \cite{ericksonli02,karnikdaiguji07,li2004electrokinetics,shin2005mixing}, 
and biological ion channels \cite{liumajdxu14,eisenberglinliu12,luholst10,lueisenberg13}, to name a few.
As a system of coupled nonlinear partial differential equations, the PNP equations lead to a rich source of problems for pde analysis,
where the system and its modifications are studied to improve understanding of the existence, uniqueness, and stability of a solution 
\cite{BD00,BHN94,Jerome85,ryham09}.

Due to the wide variety of devices modeled by the PNP equations (or some modification thereof), 
computer simulation for this system of differential equations is a major application.
This has led to a great deal of literature focusing on numerical solvers for the PNP systems 
\cite{BRF83,BBFS89,solver,Jerome96,luholst10,batterydft14,PS09,PS10,saccostynes98,lueisenberg13,batteryxu02,liumajdxu14}.
Providing a comprehensive numerical analysis would require an energy estimate to establish the stability of the discretization,
some notion of convergence of the computed solution to the true solution, and well-posedness of the discrete problem.
Prohl and Schmuck carried out such an analysis for the PNP system \cite{PS09} and the PNP system coupled with the Navier-Stokes equation \cite{PS10}
using a numerical scheme that employs the method of lines, a finite elements discretization, and fixed point iteration. 
%{\color{red} though the energy laws written in terms of the primitive variables led to a CFL-type restriction, requiring $\Delta t \le C\Delta x^2$.}
In this work, a novel finite element discretization is used that uses a logarithmic transformation of the charge carrier densities,
which yields several favorable properties, such as automatic positivity of the solution densities and energetic stability of the numerical solution
for both the drift-diffusion model and the electrokinetic model.

In \S 2, we define the PNP equations, introduce the energy law corresponding to the PNP system, propose our discretization 
and prove an energy estimates fully discrete solutions of the PNP system.
In \S 3, we provide a similar analysis for the PNP system coupled with an incompressible fluid,
where the divergence-free property of the fluid plays an essential role in establishing stability;
consequently, this section also contains a discussion of a discontinuous Galerkin approximation for the Navier-Stokes system,
which is known to preserve this divergence-free property.
In \S 4, some numerical experiments are carried out to validate the convergence properties of the numerical solver and the discrete energy estimate.
Some closing remarks are given in \S 5.

 % intro and background
%!TEX root = ./finite-element-approach-pnp.tex
\def\jump#1{[\![ #1 ]\!]}

\section{The Poisson-Nernst-Planck equations and its discretization}

The PNP system models the interaction of $N\ge2$ ionic species through an electrostatic field.
We denote the ion density of the $i^\mathrm{th}$ species by $\rho_i>0$ and the electrostatic potential by $\phi$.
Let $\Omega \subset \Re^d$ for $d=1, 2,$ or $3$, and $T$ be a positive and finite real number.
Then, the PNP system is described by the initial value problem:
\begin{align}
	-\nabla\cdot(\epsilon\nabla \phi)	&=	 e_c\sum_{i=1}^N q_i \rho_i,	\label{poisson}	\\
	\frac{\partial \rho_i}{\partial t}	&=	-\nabla \cdot \vec{J}_i,				& i=1,\ldots,N,	\label{mass cons}\\
				\vec{J}_i	&= - D_i \nabla\rho_i - q_i \mu_i \rho_i \nabla \phi,	& i=1,\ldots,N,	\label{NP}
\end{align}
in $\Omega\times(0,T]$, where
\begin{align*}
								\rho_i(x,0)	&=	\rho_{i,0}(x),	&\mathrm{for}\ x\in \Omega,\ i=1,\ldots,N,
\intertext{and}
	-\nabla\cdot\big(\epsilon\nabla \phi(x,0)\big)	&=	 e_c\sum_{i=1}^N q_i \rho_{i,0}(x),	&\mathrm{for}\ x\in \Omega.
\end{align*}
The electric permittivity, $\epsilon = \epsilon_r\epsilon_0>0$, 
represents the the vacuum permittivity constant, $\epsilon_0$, and the material dependent relative permittivity, $\epsilon_r$,
which may be discontinuous in general.
The electric permittivity measures the strength of the long-range (nonlocal) interactions of charged particles.
The term, $e_c$, represents the elementary charge constant.
The ionic flux for the $i^\mathrm{th}$ ion species is denoted by $\vec{J}_i$ and
is defined in \eref{NP} by a model proposed by Nernst \cite{nernst} and Planck \cite{planck},
where $D_i>0$ is the diffusivity, $q_i$ is the valence number, and $\mu_i$ is the mobility of the $i^{\mathrm{th}}$ ion species.
This model is reasonable when the charge carriers are sufficiently small (with respect to the length scale of the domain)
to be accurately modeled as point charges.

We assume the Einstein relation holds, so that we may write
\[
	\mu_i =   e_c D_i /  \kappa_B T^\circ,
\]
where this relation implies that equilibrium distribution of the charge carriers should follow a Boltzmann distribution.
Here, $\kappa_B$ is the Boltzmann constant and $T^\circ$ is the temperature, 
which is considered to be fixed for the purposes of this paper.
For simplicity, we choose our units of measurement such that $e_c = 1$ and $\kappa_B T^\circ = 1$, so that $\mu_i = D_i$.

\subsection{Boundary conditions}
The boundary conditions are a critical component of the PNP model and determine important qualitative behavior of the solution.
A detailed account of stability and existence for steady-state continuous and finite element solutions has reported in \cite{Jerome85,Jerome96}.
For the time-dependent case, existence and stability for the continuous case has been established \cite{BD00,BHN94};
this work is concerned with establishing the stability of finite element solutions for the time dependent PNP equations,
so homogeneous no-flux conditions are considered for each ion species,
\begin{equation} \label{no-ion-flux}
	D_i (\nabla \rho_i + q_i \rho_i \nabla \phi) \cdot \vec n	=	\vec{J}_i\cdot \vec n	= 0,	\qquad\mathrm{on}\ \partial \Omega.
\end{equation}
For the Poisson equation, write a disjoint partition of the boundary: $\partial \Omega = \Gamma_{D} \cup \Gamma_{N} \cup \Gamma_{R}$ with
\begin{align}
									\phi 	&= \delta V	&\mathrm{on}\ \Gamma_{D},	\nonumber\\
				\epsilon \nabla \phi \cdot\vec n	&= S			&\mathrm{on}\ \Gamma_{N},	\label{phi bcs}\\
	\epsilon \nabla \phi \cdot\vec n + \kappa \phi	&= C			&\mathrm{on}\ \Gamma_{R}, \nonumber
\end{align}
where $\delta V$, $S$ and $C$ are given functions.
The Dirichlet boundary condition models an applied voltage, the Neumann condition models surface charges,
and the Robin condition models a capacitor, where $\kappa \ge \bar\kappa > 0$.
Without loss of generality, it is assumed in the case where $\delta V$ is constant, so that $\delta V\equiv 0$.
The capacitance is required to be positive on $\Gamma_R$,  $\kappa > 0$,
though one may take $\Gamma_R = \emptyset$ if no capacitor is to be modeled.
Any combination of Dirichlet, Neumann, and Robin boundary conditions can be applied to $\phi$ for the purposes of this paper,
though the case of pure Neumann boundary conditions requires the an additional constraint,
which can be taken to be $\int_\Omega \phi(x,t)\, dx = 0$ for $0\le t\le T$ so that $\phi$ is uniquely defined.

\subsection{Computational difficulties of the PNP system}
The PNP equations present several difficulties when computing approximate solutions.
Firstly, it is a strongly coupled system of nonlinear equations, which requires an iterative linearization procedure to resolve the nonlinearities,
such as a Newton-Raphson method or fixed point iteration.
While fixed point iteration serves as a helpful tool in the analysis of the PNP system, 
it is difficult to establish its rate of convergence, which is critical in the practice of computing solutions.
Secondly, the Nernst-Planck fluxes given in \eref{NP} are often convection-dominated, 
which leads to several analytical and numerical difficulties, such as the positivity of the ion concentrations, $\rho_i > 0$, 
and the numerical stability of a given discretization.
There are several ways to overcome these issues: the first is to introduce some sort of upwinding scheme,
such as %{\color{red} mimetic finite differences \cite{ludmil?}}, 
the Scharfetter-Gummel scheme and the box method \cite{BCC98,BRF83,schargummel},
or the edge-averaged finite element (EAFE) method \cite{LZ12,XZ99}.
Another option is to introduce a nonlinear change of variables,
such as the Slotboom variables \cite{batterydft14,slotboom73,lueisenberg13} or the quasi-Fermi variables \cite{BCC98,BRF83,Jerome85,Jerome96},
which symmetrize the Nernst-Planck flux.

In this work, a novel change of variables converts the convection-dominated Nernst-Planck flux into a nonlinear-diffusion flux,
similar to the quasi-Fermi variables. 
As a matter of fact, this change of variables is directly related to the quasi-Fermi variables, 
though the quasi-Fermi variables introduce a nonlinear coupling between the equations in the time derivative term in \eref{mass cons}.

\subsection{Energy of the PNP system}
For PNP systems satisfying no-flux boundary conditions on the ion concentrations \eref{no-ion-flux}, 
it is known that the ion concentrations satisfy the conservation of mass,
\[
	\int_{\Omega} \rho_i(x,t)\, dx	=	\int_{\Omega} \rho_{i,0}(x)\, dx,	\qquad\mathrm{for}\ 0 \le t \le T.
\]
Furthermore, in the presence of homogenous Dirichlet boundary conditions on $\phi$,
the stability of the solution to the PNP system is known \cite{BD00,BHN94} to be given by the energy law
\begin{multline}
	\frac{d}{dt} \left\{ \int_\Omega   \sum_{i=1}^N \rho_i (\log \rho_i -1) + \frac{\epsilon}{2}\left| \nabla\phi \right|^2\, dx
			+ \int_{\Gamma_R} \frac{\kappa}{2} \left| \phi \right|^2\, ds \right\}	\\
		=	- \int_\Omega  \sum_{i=1}^N{D_i} \rho_i \left| \nabla \left( \log\rho_i +  q_i \phi \right)\right|^2\ dx,  \label{energy law}
\end{multline}
where the functional,
\[
	\int_\Omega   \sum_{i=1}^N \rho_i (\log \rho_i -1) + \frac{\epsilon}{2}\left| \nabla\phi \right|^2\, dx
			+ \int_{\Gamma_R} \frac{\kappa}{2} \left| \phi \right|^2\, ds,
\]
is referred to as the \emph{energy} and
\[
	\int_\Omega  \sum_{i=1}^N{D_i} \rho_i \left| \nabla \left( \log\rho_i +  q_i \phi \right)\right|^2\ dx \ge 0,
\]
as the \emph{rate of dissipation}.
The physical relevance of the no-flux boundary conditions on the ion concentration and the no-voltage boundary conditions on $\phi$
stem from the notion that the PNP system is energetically closed; that is, there is no direct input or output of energy at the boundary.
However, the case of applying a Dirichlet boundary condition to $\phi$ is critically important in the analysis of many electrostatic devices,
such as semiconductors, protein nano-channels, and electrokinetic devices.
Accordingly, one can show that systems with a Dirichlet boundary condition on $\phi$ still satisfy a similar energy law.

The energy associated with this system takes an unusual form compared to those typically encountered in finite element analysis 
due to the presence of the logarithm;
nevertheless, this identity establishes the stability of the system as well as prescribes its rate of energy dissipation.
Our choice of variables is motivated by the energy law \eref{energy law}, 
which specifies the regularity of the solution: take
\[
	\phi \in \mathcal{H}^1_{\Gamma_D} \equiv 
	\left\{ v \in \mathcal{H}^1(\Omega) \, \Big| \, v|_{\Gamma_{D}} = \delta V \right\},
\]
which is the subpace of the usual $\mathcal{H}^1(\Omega)$ Sobolev space,
and for the ion concentrations,
\[
	\rho_i \in \widetilde{W} \equiv \bigg\{ \rho: \Omega\mapsto \Re \,\Big|\, 
		\int_{\Omega} \rho(\log\rho-1)\, dx < \infty\ \mathrm{and}\ \int_\Omega |\nabla \log\rho|^2\, dx < \infty \bigg\},
\]
leading implicitly to a positivity condition for the ions concentrations.
A log-transformation of the ion concentrations yields a more familiar space
\[
	\eta = \log\rho	\in W \equiv \mathcal{H}^1(\Omega) \cap \mathcal{L}^\infty(\Omega),
\]
and, furthermore, guarantees positivity of the ion concentrations, since $\rho = e^\eta > 0.$

\subsection{Log-density formulation and its energy}

The standard $\mathcal{L}_2(\Omega)$ inner-product is used
\begin{align*}
	( u, v ) &=	\int_{\Omega} uv\,dx,
\intertext{and inner-products on the boundary are given by}
	\langle u, v \rangle_{R}	= \int_{\Gamma_R} uv \,ds,
\end{align*}
and $\langle u, v\rangle_N$ is similarly defined on $\Gamma_N$.

Using the log-density variables, the PNP equations are written in their weak form: find $\eta_i(t) \in W$ with $\eta_{i,t}(t) \in \mathcal{L}^2(\Omega)$
and $\phi \in \mathcal{H}^1_{\Gamma_D}$ such that
\begin{align*}
	(\epsilon\nabla\phi, \nabla v) + \langle \kappa \phi, v \rangle_{\Gamma_{R}}	- \sum_{i=1}^N q_i (e^{\eta_{i}},v)	
							&= \langle C, v \rangle_{\Gamma_{R}} + \langle  S, v \rangle_{\Gamma_{N}},\\
	\Big(\frac{_{\partial}}{^{\partial t}}e^{\eta_{i}}, w \Big)	+	\left( D_i e^{\eta_{i}}\nabla \big( \eta_{i} +  q_i\phi \big),\nabla w \right)	
							&= 0,
\end{align*}
for $i=1,\ldots, N$ and all $v \in V$, $w \in W$, and all times $0< t \le T$, where
\begin{align*}
				\big(\eta_i(\cdot,0), w \big)		&=	\big( \eta_{i,0}, w \big),	&\mathrm{for\ all}\ w \in W,\\
	(\epsilon\nabla\phi(\cdot,0), \nabla v) + \langle \kappa \phi(\cdot,0), v \rangle_{\Gamma_{R}}	
							&= \sum_{i=1}^N q_i (e^{\eta_{i}(\cdot,0)},v) 
							+ \langle C, v \rangle_{\Gamma_{R}} + \langle  S, v \rangle_{\Gamma_{N}},	&\mathrm{for\ all}\ v \in V.
\end{align*}
The energy law written in these new variables takes the form
\begin{multline}
	\frac{d}{dt} \left\{ \int_\Omega   \sum_{i=1}^N e^{\eta_i} (\eta_i -1) + \frac{\epsilon}{2}\left| \nabla\phi \right|^2\, dx
			+ \int_{\Gamma_R} \frac{\kappa}{2} \left| \phi \right|^2\, ds \right\}	\\
		=	- \int_\Omega  \sum_{i=1}^N{D_i} e^{\eta_i} \left| \nabla \left( \eta_i +  q_i \phi \right)\right|^2\ dx.  \label{transformed energy law}
\end{multline}

\subsection{The discrete formulation}
Let $\mathcal{T}_h$ be a triangulation or tetrahedralization of the domain.
For the usual space of piecewise linear polynomials,
\[
	W_h \equiv \left\{ w_h \in \mathcal{H}^1(\Omega) \, \Big| \, v|_{\tau} \in \mathbb{P}^1 \ \mathrm{for \ all} \ \tau \in \mathcal{T}_h \right\} \subset \mathcal{H}^1(\Omega),
\]
and denote the nodal interpolation operator, $\mathcal{I}_h:\mathcal{H}^1(\Omega) \rightarrow W_h$.
When Dirichlet boundary conditions are imposed on the electrostatic potential, 
define the spaces of continuous piecewise linear finite element functions 
\begin{align*}
	V_{h,\Gamma_D} &\equiv \left\{ v_h \in W_h \, \Big| \, v_h|_{\Gamma_{D}} = \mathcal{I}_h(\delta V) \right\},\\
	V_{h,0} &\equiv \left\{ v_h \in W_h \, \Big| \, v_h|_{\Gamma_{D}} = 0 \right\}.
\end{align*}
When Robin boundary conditions are imposed, the lumped boundary inner-product,
\[
	\langle u, v \rangle_{R,h}	= \int_{\Gamma_R} \mathcal{I}_h(uv)\,ds.
\]
is needed to for theoretical purposes to preserve monotonicity of the discrete Poisson equation.
For the time discretization, define a partition of the time domain,
\[
	0 = t_0 < t_1 < \cdots < t_m = T,
\]
where $\Delta t_j \equiv t_j-t_{j-1}$.

The finite element solution to the PNP equations is defined using the above finite element spaces and an implicit time discretization defined on the time partition:
find $\eta_{i,h}^{(j)} \in W_h$ and $\phi_h^{(j)} \in V_{h,\Gamma_D}$ satisfying
\begin{align}
	\left(\epsilon\nabla\phi_h^{(j)}, \nabla v_h\right) + \big\langle \kappa\phi_h^{(j)}, v_h \big\rangle_{R,h}
							- \sum_{i=1}^N q_i \left(e^{\eta_{i,h}^{(j)}},v_h\right)	
							&= \langle C, v_h \rangle_{R,h} + \langle  S, v_h \rangle_{\Gamma_{N}},		\label{fe-poisson} \\
	\frac{1}{\Delta t_j}\big(e^{\eta_{i,h}^{(j)}}, w_h \big)	+ \left( D_i e^{\eta_{i,h}^{(j)}}\nabla \big( \eta_{i,h}^{(j)} +  q_i\phi_h^{(j)} \big),\nabla w_h \right)	
							&= \frac{1}{\Delta t_j}\big(e^{\eta_{i,h}^{(j)}}, w_h \big),	\label{fe-nernst-planck}
\end{align}
for $i=1,\ldots, N$ and all $v_h \in V_{h,0}$, $w_h \in W_h$, and $j = 1,\ldots, m$.
The initial condition is given by
\begin{align}
				\big(\eta_{i,h}^{(0)}, w_h \big)	&=	\big( \eta_{i,0}, w_h \big),	&\mathrm{for\ all}\ w_h \in W_h, &\quad i=1,\ldots,N,\\
	\big(\epsilon\nabla\phi_h^{(0)},\nabla v_h \big)	+ \big\langle \kappa\phi_h^{(0)}, v_h \big\rangle_{R,h}
					 &=	\sum_{i=1}^N q_i\big( e^{\eta_{i,0}}, v_h \big),	\nonumber\\
					 &\quad + \langle C, v_h \rangle_{R,h} + \langle  S, v_h \rangle_{\Gamma_{N}} &\mathrm{for\ all}\ v_h \in V_{h,0}.	\label{fe-init-phi}
\end{align}

\subsection{A discrete maximum principle}
The presence of a nonzero Dirichlet boundary condition imposes additional constraints on the finite element mesh  
in order to maintain a discrete maximum principle for $\phi_h$.
Two approaches for satisfying a discrete maximum principle are summarized in the following lemma.
The first approach constrains the interior angles of the mesh so that the discrete differential operator is monotone,
the second approach requires quasi-uniformity and a sufficiently refined mesh.

Consider an element, $\tau\in\mathcal{T}_h$.
The term \emph{facet} is used below to denote an element edge when $d=2$, and an element face when $d=3$.
Let $E$ be an edge (1-dimensional sub-simplex) of $\tau$.
The $d-2$ dimensional simplex in $\tau$ that is opposite to the edge, $E$, is denoted by $k_E^\tau$.
(In two-dimensions, $|k_E^\tau|=1$.)
The angle, $\theta_E^\tau$, is the angle between the facets containing edge $E$.
The average value of $\epsilon$ on element $\tau$ is given by $\langle \epsilon \rangle_\tau = \int_\tau \epsilon\,dx/ |\tau|$.
In \cite{XZ99}, it was shown that the off-diagonal entries of the stiffness matrix corresponding to the vertices on edge $E$ are given by,
\[
	\omega_E \equiv \frac{1}{d(d-1)} \sum_{\tau \supset E} \langle\epsilon\rangle_\tau|k_E^\tau| \cot \theta_E^\tau \ge 0,
\]
where the summation $\sum_{\tau\supset E}$ is taken to be the summation over all elements $\tau\in \mathcal{T}_h$ containing edge $E$.
In the case where $\epsilon$ is constant, this condition simply requires $\mathcal{T}_h$ to be a Delaunay mesh.
Using this identity, a necessary and sufficient condition is given for the Poisson matrix to be monotone, 
implying that it has a nonnegative inverse and, consequently, a discrete maximum principle.

\begin{lemma} \label{discrete weak maximum principle}
Suppose that one of the following assumptions hold:
\begin{enumerate}[i.]
	\item On each edge $E \in \mathcal{T}_h$, it holds that $\omega_E \equiv \frac{1}{d(d-1)} \sum_{\tau \supset E} \langle\epsilon\rangle_\tau|k_E^\tau| \cot \theta_E^\tau \ge 0,$
	\item The permittivity, $\epsilon$, is constant and $\mathcal{T}_h$ is quasi-uniform and sufficiently refined.
\end{enumerate}
Then, the finite element approximation, $\phi_{h,D} \in V_{h,\Gamma_D}$ defined by
\[
	(\epsilon\nabla\phi_{h,D}, \nabla v_h) + \langle \phi_{h,D}, v_h \rangle_{R,h}
							= 0	\qquad\mathrm{for\ all}\ v_h \in V_{h,0},
\]
satisfies a weak maximum principle
\begin{equation*}
	\max_{x\in\Omega} |\phi_{h,D}(x)| \le C_{\infty} \max_{x\in\Gamma_D} | \mathcal{I}_h(\delta V)|,
\end{equation*}
for some $C_\infty \ge 1$ that only depends on $\epsilon$, $\kappa$, and the shape of $\Omega$.
Furthermore, under condition (i), the bounding constant is given, $C_\infty=1$.
\end{lemma}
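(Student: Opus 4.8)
The plan is to recast the defining relation in the nodal basis and then dispatch the two hypotheses by genuinely different arguments: hypothesis (i) is algebraic, whereas hypothesis (ii) requires an $h$-uniform maximum-norm stability estimate. First I would expand $\phi_{h,D} = \sum_k u_k \psi_k$ in the nodal basis $\{\psi_k\}$ of $W_h$, partition the nodes into the Dirichlet nodes $\partial$, where $u_k = \mathcal{I}_h(\delta V)(x_k)$ is prescribed, and the free nodes $I$, and set $A_{ik} = (\epsilon\nabla\psi_k,\nabla\psi_i) + \langle\psi_k,\psi_i\rangle_{R,h}$, so that the defining equation reads $\sum_k A_{ik} u_k = 0$ for each $i\in I$. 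Two structural facts drive the rest. First, the lumped Robin product satisfies $\langle\psi_i,\psi_j\rangle_{R,h} = \delta_{ij}\int_{\Gamma_R}\psi_i\,ds$, so it contributes only a nonnegative amount to the diagonal and leaves the off-diagonal entries equal to $-\omega_E$. Second, testing the bilinear form against the constant function gives the row-sum identity $\sum_k A_{ik} = \int_{\Gamma_R}\psi_i\,ds \ge 0$, with equality precisely when node $i$ does not touch $\Gamma_R$.

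Under hypothesis (i) the off-diagonal entries $-\omega_E$ are nonpositive while the diagonal entries are positive, so $A$ is a monotone M-matrix. To extract the sharp constant $C_\infty = 1$, write $M = \max_{x\in\Gamma_D}|\mathcal{I}_h(\delta V)|$ and suppose the nodal maximum $u_{i_0} = \max_k u_k$ is positive and attained at a free node. Using $-A_{i_0 k}\ge 0$ together with $u_k \le u_{i_0}$ in the $i_0$ equation yields $u_{i_0}\sum_k A_{i_0 k}\le 0$; since the row sum is nonnegative and $u_{i_0}>0$, either the row sum is strictly positive (a $\Gamma_R$ node), which is a contradiction, or it vanishes and every edge-neighbor of $i_0$ also attains the maximum. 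Propagating this equality along mesh edges by connectivity, the maximum must be attained on the boundary; a $\Gamma_R$ node is excluded by its strictly positive row sum, so the maximum sits at a Dirichlet node and $u_{i_0}\le M$. The same argument applied to $-\phi_{h,D}$ gives the lower bound, so $|\phi_{h,D}|\le M$ at every node and, by piecewise linearity, throughout $\Omega$. The nonnegative Robin diagonal only pulls the solution toward zero, which is exactly what makes the constant $0$ an admissible barrier.

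Under hypothesis (ii) an obtuse angle can force some $\omega_E < 0$, so $A$ need not be an M-matrix and the algebraic argument is unavailable. Instead I would invoke a weak discrete maximum principle of Schatz type: since $\epsilon$ is constant and $\mathcal{T}_h$ is quasi-uniform, the discrete harmonic extension $\mathcal{I}_h(\delta V)\mapsto\phi_{h,D}$ is stable in the maximum norm, uniformly in $h$ once the mesh is sufficiently refined, by the local-energy and weighted-norm estimates underlying such results. The nonnegative zeroth-order Robin term preserves coercivity and does not disrupt these estimates, so one obtains the stated bound with a constant $C_\infty\ge 1$ depending only on $\epsilon$, $\kappa$, and the shape of $\Omega$.

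I expect the substance of the proof to lie entirely in hypothesis (ii): establishing maximum-norm stability of the discrete harmonic extension with no angle condition, relying only on quasi-uniformity together with the sufficiently-refined assumption, is precisely where the delicate local-energy and logarithmic-weight machinery is needed, and where one must verify that the Robin boundary term and the lumped boundary quadrature are compatible with those estimates. Case (i), by contrast, is elementary once the M-matrix structure and the row-sum identity are recorded.
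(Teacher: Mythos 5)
Your proposal takes essentially the same route as the paper, which handles case (i) by the monotonicity (M-matrix) structure of the discrete operator --- noting, exactly as you do, that the lumped Robin term contributes only nonnegative diagonal entries and therefore preserves that structure --- and handles case (ii) by appealing to Schatz's weak discrete maximum principle for quasi-uniform, sufficiently refined meshes. Your write-up merely fills in details the paper leaves as a brief remark with references: the nodal row-sum identity, the barrier/propagation argument yielding $C_\infty = 1$ in case (i), and the observation that the Robin term and boundary quadrature must be checked for compatibility with the $L^\infty$-stability machinery in case (ii).
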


The proof of case ($i$) follows from the monotonicity of the discrete differential operator and the proof for case ($ii$) can be referenced from \cite{schatz80}.
In these works, there is no imposed Robin boundary condition, though the mass lumping discretization of this term preserves the discrete maximum principle.

\subsection{A discrete energy estimate}
For autonomous boundary conditions, $\delta V, S,$ and $C$, the stability of the finite element solution analogous to \eref{energy law} is verified.
\begin{theorem} \label{discrete electrostatic stability}
Suppose $\eta_{i,h}^{(j)} \in W_h$ and $\phi_h^{(j)} \in V_{h,\Gamma_D}$ satisfy equations \eref{fe-poisson}---\eref{fe-init-phi} for $i=1,\ldots, m$
and that one of the assumptions in Lemma \ref{discrete weak maximum principle} is satisfied.
Then, the mass is conserved for each ion species,
\begin{equation}	\label{fe-mass-cons}
	\int_{\Omega} e^{\eta_{i,h}^{(j)}(x,t)}\,dx	=	\int_{\Omega} e^{\eta_{i,0}(x)}\,dx,		\quad\mathrm{for}\ i=1,\ldots,N,\quad j=1,\ldots,m,
\end{equation}
and the energy estimate is satisfied,
\begin{multline}
	\max_{1\le j \le m} \left\{ \int_\Omega   \sum_{i=1}^N e^{\eta_{i,h}^{(j)}} (\eta_{i,h}^{(j)} -1) + \frac{\epsilon}{2}\left| \nabla\phi_h^{(j)} \right|^2\, dx
			+ \frac{1}{2}\int_{\Gamma_R}  \mathcal{I}_h\big(\kappa\big(\phi_h^{(j)}\big)^2\big)\, ds \right\}	\\
			+\sum_{j=1}^m \Delta t_j \int_\Omega  \sum_{i=1}^N{D_i} e^{\eta_{i,h}^{(j)}} \left| \nabla \left( \eta_{i,h}^{(j)} +  q_i \phi_h^{(j)} \right)\right|^2\ dx\\
	\le  \int_\Omega   \sum_{i=1}^N e^{\eta_{i,h}^{(0)}} (\eta_{i,h}^{(0)} -1) + \frac{\epsilon}{2}\left| \nabla\phi_h^{(0)} \right|^2\, dx
			+ \frac{1}{2} \int_{\Gamma_R}  \mathcal{I}_h\big(\kappa\big(\phi_h^{(0)}\big)^2\big)\, ds + C_1,
\end{multline}
where $C_1$ depends on the number of ion species, their initial masses, the electric permittivity coefficient, and $C_\infty$.
In the cases of no Dirichlet boundary conditions or a homogeneous Dirichlet boundary condition on $\phi_h$, the constant $C_1$ vanishes.
\end{theorem}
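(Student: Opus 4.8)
The plan is to reproduce the continuous derivation of the energy law \eref{transformed energy law} at the discrete level, replacing the chain rule by a convexity inequality that the implicit time stepping respects. Throughout, write $\mathcal{E}^{(j)}$ for the bracketed discrete energy at step $j$ and $\mathcal{D}^{(j)}=\sum_i D_i\int_\Omega e^{\eta_{i,h}^{(j)}}|\nabla(\eta_{i,h}^{(j)}+q_i\phi_h^{(j)})|^2\,dx$ for the dissipation integral. First I would establish mass conservation \eref{fe-mass-cons} by taking $w_h\equiv 1\in W_h$ in \eref{fe-nernst-planck}: the gradient term vanishes and the identity collapses to $\int_\Omega e^{\eta_{i,h}^{(j)}}\,dx=\int_\Omega e^{\eta_{i,h}^{(j-1)}}\,dx$, so induction on $j$ gives the claim. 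For the energy estimate the central step is to test \eref{fe-nernst-planck} with the discrete chemical potential $w_h=\eta_{i,h}^{(j)}+q_i\phi_h^{(j)}\in W_h$ and sum over $i$. The diffusion term then reproduces exactly $\Delta t_j\,\mathcal{D}^{(j)}$, while the time-difference term splits into an entropy contribution $\sum_i(e^{\eta_{i,h}^{(j)}}-e^{\eta_{i,h}^{(j-1)}},\eta_{i,h}^{(j)})$ and an electrostatic contribution $\sum_i q_i(e^{\eta_{i,h}^{(j)}}-e^{\eta_{i,h}^{(j-1)}},\phi_h^{(j)})$.

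The two contributions are controlled by the structural identities that make the scheme energetically stable. For the entropy term I would invoke convexity of $g(\rho)=\rho(\log\rho-1)$, whose derivative is $g'(\rho)=\log\rho$; the supporting-line inequality $g(\rho^{(j)})-g(\rho^{(j-1)})\le g'(\rho^{(j)})(\rho^{(j)}-\rho^{(j-1)})$ evaluated at $\rho^{(j)}=e^{\eta_{i,h}^{(j)}}$ shows that $\sum_i(e^{\eta_{i,h}^{(j)}}-e^{\eta_{i,h}^{(j-1)}},\eta_{i,h}^{(j)})$ bounds from above the increment of $\int_\Omega\sum_i e^{\eta_{i,h}}(\eta_{i,h}-1)\,dx$. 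This is the discrete surrogate for the chain rule used in the continuous proof. For the electrostatic term I would subtract the discrete Poisson equation \eref{fe-poisson} at consecutive times, which is legitimate because $\phi_h^{(j)}-\phi_h^{(j-1)}\in V_{h,0}$, to obtain $\sum_i q_i(e^{\eta_{i,h}^{(j)}}-e^{\eta_{i,h}^{(j-1)}},v_h)=a(\phi_h^{(j)}-\phi_h^{(j-1)},v_h)$ for all $v_h\in V_{h,0}$, where $a(\phi,v)=(\epsilon\nabla\phi,\nabla v)+\langle\kappa\phi,v\rangle_{R,h}$ is symmetric and positive. Choosing $v_h=\phi_h^{(j)}$ (when admissible) and applying the polarization identity $a(u-w,u)=\tfrac12 a(u,u)-\tfrac12 a(w,w)+\tfrac12 a(u-w,u-w)$ bounds this contribution below by the increment of $\tfrac12 a(\phi_h^{(j)},\phi_h^{(j)})$, which is exactly the field-plus-Robin part of $\mathcal{E}^{(j)}$, up to the nonnegative remainder $\tfrac12 a(\phi_h^{(j)}-\phi_h^{(j-1)},\phi_h^{(j)}-\phi_h^{(j-1)})$. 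Combining the two bounds yields the per-step inequality $\mathcal{E}^{(j)}-\mathcal{E}^{(j-1)}+\Delta t_j\,\mathcal{D}^{(j)}\le 0$ in the homogeneous case, and telescoping gives the estimate with $C_1=0$.

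The hard part is the nonhomogeneous Dirichlet condition, where $\phi_h^{(j)}\notin V_{h,0}$ is no longer an admissible test function in \eref{fe-poisson}. To overcome this I would introduce the time-independent discrete harmonic lifting $\phi_{h,D}\in V_{h,\Gamma_D}$ of the autonomous boundary data, characterized by $a(\phi_{h,D},v_h)=0$ for all $v_h\in V_{h,0}$, and split $\phi_h^{(j)}=(\phi_h^{(j)}-\phi_{h,D})+\phi_{h,D}$ with $\phi_h^{(j)}-\phi_{h,D}\in V_{h,0}$. The admissible part regenerates the electrostatic energy increment as above, while the remainder leaves a residual $B_j=\sum_i q_i(e^{\eta_{i,h}^{(j)}}-e^{\eta_{i,h}^{(j-1)}},\phi_{h,D})$. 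The decisive observation is that, because $\phi_{h,D}$ is independent of $j$, the sum $\sum_{j=1}^J B_j$ telescopes to $\sum_i q_i(e^{\eta_{i,h}^{(J)}}-e^{\eta_{i,h}^{(0)}},\phi_{h,D})$, which I would then estimate using conservation of mass \eref{fe-mass-cons} to control $\|e^{\eta_{i,h}^{(j)}}\|_{L^1}$ and the discrete maximum principle of Lemma \ref{discrete weak maximum principle} to control $\|\phi_{h,D}\|_{L^\infty}\le C_\infty\max_{\Gamma_D}|\mathcal{I}_h(\delta V)|$; this produces $C_1$ proportional to $C_\infty$, the number of species, and their conserved initial masses. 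Summing the per-step inequalities $\mathcal{E}^{(j)}-\mathcal{E}^{(j-1)}+\Delta t_j\,\mathcal{D}^{(j)}+B_j\le 0$ from $1$ to $J$ and absorbing the telescoped boundary term bounds $\mathcal{E}^{(J)}+\sum_{j=1}^J\Delta t_j\,\mathcal{D}^{(j)}$ by $\mathcal{E}^{(0)}+C_1$ uniformly in $J$, which yields the asserted estimate and, since $B_j\equiv 0$ when $\delta V\equiv 0$ or no Dirichlet condition is present, the vanishing of $C_1$ in those cases.
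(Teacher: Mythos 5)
Your proposal is correct and follows essentially the same path as the paper's own proof: mass conservation via $w_h\equiv 1$, testing \eref{fe-nernst-planck} with $w_h=\eta_{i,h}^{(j)}+q_i\phi_h^{(j)}$, the convexity inequality for $\rho(\log\rho-1)$, subtracting consecutive Poisson equations and splitting off the time-independent discrete lifting $\phi_{h,D}$, and finally telescoping the residual boundary term and bounding it with mass conservation together with Lemma \ref{discrete weak maximum principle}. The only (purely presentational) difference is that you treat the homogeneous case first and carry the $\phi_{h,D}$ residual as a separately telescoped sum, whereas the paper absorbs $\sum_i q_i\big(e^{\eta_{i,h}^{(j)}},\phi_{h,D}\big)$ into a modified energy before telescoping; the algebra is identical.
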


\begin{proof}
To prove the conservation of mass for the ion species, choose $w_h \equiv 1 \in W_h$ in equation \eref{fe-nernst-planck} to show
\[
	\frac{1}{\Delta t_j} \int_\Omega e^{\eta_{i,h}^{(j)}}-e^{\eta_{i,h}^{(j-1)}}\, dx	
	= \bigg(\frac{e^{\eta_{i,h}^{(j)}}-e^{\eta_{i,h}^{(j-1)}}}{\Delta t_j}, 1 \bigg) + \left( D_i e^{\eta_{i,h}^{(j)}}\nabla \big( \eta_{i,h}^{(j)} +  q_i\phi_h^{(j)} \big),\nabla 1 \right)	= 0,	
\]
which yields \eref{fe-mass-cons}.
This argument expectedly fails when Dirichlet boundary conditions are imposed on $\eta_{i,h}$, since $1 \not\in W_h$.
This is not an artifact of the discretization, however, and is also the case for the continuous system.

For the energy estimate, set $w_h = \eta_{i,h}^{(j)} + q_i \phi_h^{(j)} \in W_h$, which is a valid choice for the test function since $\phi_h^{(j)} \in V_{h,\Gamma_D} \subseteq W_h$.
This gives
\[
	\bigg(\frac{e^{\eta_{i,h}^{(j)}}-e^{\eta_{i,h}^{(j-1)}}}{\Delta t_j}, \eta_{i,h}^{(j)} \bigg) 
		+ q_i \bigg(\frac{e^{\eta_{i,h}^{(j)}}-e^{\eta_{i,h}^{(j-1)}}}{\Delta t_j}, \phi_h^{(j)} \bigg)
		=	-\int_{\Omega} D_i e^{\eta_{i,h}^{(j)}} \left|\nabla\left(\eta_{i,h}^{(j)} + q_i \phi_h^{(j)} \right) \right|^2\, dx,
\]
which is summed over $i=1,\ldots, N$, to get
\begin{multline}	\label{fe-bound-i}
	\sum_{i=1}^N\bigg(\frac{e^{\eta_{i,h}^{(j)}}-e^{\eta_{i,h}^{(j-1)}}}{\Delta t_j}, \eta_{i,h}^{(j)} \bigg) 
		+ \sum_{i=1}^N q_i \bigg(\frac{e^{\eta_{i,h}^{(j)}}-e^{\eta_{i,h}^{(j-1)}}}{\Delta t_j}, \phi_h^{(j)} \bigg)	\\
		=	-\sum_{i=1}^N \int_{\Omega} D_i e^{\eta_{i,h}^{(j)}} \left|\nabla\left(\eta_{i,h}^{(j)} + q_i \phi_h^{(j)} \right) \right|^2\, dx.
\end{multline}
The first terms on the left are bounded by using the convexity of the function $f(\rho) = \rho(\log\rho-1)$ for $\rho>0$, which can be used to show
\[
	(\rho_j - \rho_{j-1}) \log\rho_j \ge \rho_j(\log\rho_j-1) - \rho_{j-1}(\log\rho_{j-1}-1).
\]
This follows from $f'(\rho) = \log\rho$, $f''(\rho) = 1/\rho > 0$, and Taylor expansion.
Applying this bound with $\rho_j = e^{\eta_{i,h}^{(j)}}$ and $\rho_{j-1} = e^{\eta_{i,h}^{(j-1)}}$, one obtains for each $i$
\begin{equation} \label{fe-bound-ii}
	\bigg(\frac{e^{\eta_{i,h}^{(j)}}-e^{\eta_{i,h}^{(j-1)}}}{\Delta t_j}, \eta_{i,h}^{(j)} \bigg) 
		\ge \frac{\big(e^{\eta_{i,h}^{(j)}}, \eta_{i,h}^{(j)} - 1\big) - \big(e^{\eta_{i,h}^{(j-1)}}, \eta_{i,h}^{(j-1)} - 1\big)}{\Delta t_j}.
\end{equation}

To bound the remaining term on the left side of \eref{fe-bound-i}, decompose $\phi_h^{(j)} = \phi_{h,0}^{(j)} + \phi_{h,D}$,
where $\phi_{h,0}^{(j)} \in V_{h,0}$ and $\phi_{h,D} \in V_{h,\Gamma_D}$ satisfies the steady differential equation subject to the interpolated Dirichlet boundary condition:
\begin{equation} \label{fe-dirichlet}
	(\epsilon\nabla\phi_{h,D}, \nabla v_h) + \langle \kappa\phi_{h,D}, v_h \rangle_{R,h}
							= 0,	\qquad \phi_{h,D}|_{\Gamma_D} = \mathcal{I}_h(\delta V),
\end{equation}
for all $v_h \in V_{h,0}$.
Write
\begin{equation}\label{fe-bound-iii}
	\sum_{i=1}^N q_i \bigg(\frac{e^{\eta_{i,h}^{(j)}}-e^{\eta_{i,h}^{(j-1)}}}{\Delta t_j}, \phi_h^{(j)} \bigg)
		= \sum_{i=1}^N q_i \bigg(\frac{e^{\eta_{i,h}^{(j)}}-e^{\eta_{i,h}^{(j-1)}}}{\Delta t_j}, \phi_{h,0}^{(j)} \bigg)
		   + \sum_{i=1}^N q_i \bigg(\frac{e^{\eta_{i,h}^{(j)}}-e^{\eta_{i,h}^{(j-1)}}}{\Delta t_j}, \phi_{h,D} \bigg)
\end{equation}
and bound the first term on the right by subtracting consecutive time-steps of \eref{fe-poisson} and taking $v_h = \phi_{h,0}^{(j)} \in V_{h,0}$,
\begin{align}
	\sum_{i=1}^N q_i \bigg(\frac{e^{\eta_{i,h}^{(j)}}-e^{\eta_{i,h}^{(j-1)}}}{\Delta t_j}, \phi_{h,0}^{(j)} \bigg)
		&= \left(\epsilon \frac{\nabla\phi_{h,0}^{(j)}-\nabla\phi_{h,0}^{(j-1)}}{\Delta t_j},\nabla\phi_{h,0}^{(j)}\right)
			+ \left\langle\kappa \frac{\phi_{h,0}^{(j)}-\phi_{h,0}^{(j-1)}}{\Delta t_j},\phi_{h,0}^{(j)}\right\rangle_{R,h} \nonumber\\
		&= \left(\epsilon \frac{\nabla\phi_{h}^{(j)}-\nabla\phi_{h}^{(j-1)}}{\Delta t_j},\nabla\phi_{h}^{(j)}\right)
			+ \left\langle\kappa \frac{\phi_{h}^{(j)}-\phi_{h}^{(j-1)}}{\Delta t_j},\phi_{h}^{(j)}\right\rangle_{R,h} \nonumber\\
		&\ge \frac{\big(\epsilon \nabla\phi_{h}^{(j)}, \nabla\phi_{h}^{(j)} \big) - \big(\epsilon\nabla\phi_{h}^{(j-1)},\nabla\phi_{h}^{(j-1)}\big)}{2\Delta t_j} \label{fe-bound-iv}\\
		&\qquad\qquad\qquad+ \frac{\big\langle \kappa\phi_{h}^{(j)}, \phi_{h}^{(j)}\big\rangle_{R,h} 
			- \big\langle \kappa\phi_{h}^{(j-1)}, \phi_{h}^{(j-1)}\big\rangle_{R,h}}{2\Delta t_j},	\nonumber
\end{align}
where the second equality follows from adding and subtracting the term
$$\Delta t_i^{-1}\big[(\epsilon \nabla\phi_{h,D},\nabla\phi_{h,0}^{(j)}) + \langle\kappa \phi_{h,D},\phi_{h,0}^{(j)}\rangle_{R,h}\big]$$
and the definition of $\phi_{h,D}$, \eref{fe-dirichlet}.

Combining \eref{fe-bound-i}---\eref{fe-bound-ii} and \eref{fe-bound-iii}---\eref{fe-bound-iv} gives the bound
\begin{multline}	\label{fe-telescope}
	\frac{ \left[\mathcal{E}_{h}^{(j)} + \sum_{i=1}^N q_i \big(e^{\eta_{i,h}^{(j)}}, \phi_{h,D} \big) \right]
		- \left[\mathcal{E}_{h}^{(j-1)} + \sum_{i=1}^N q_i \big(e^{\eta_{i,h}^{(j-1)}}, \phi_{h,D} \big) \right] }{ \Delta t_j}\\
	\le -\sum_{i=1}^N \int_{\Omega} D_i e^{\eta_{i,h}^{(j)}} \left|\nabla\left(\eta_{i,h}^{(j)} + q_i \phi_h^{(j)} \right) \right|^2\, dx,
\end{multline}
where $\mathcal{E}_h^{(k)}$ denotes the discrete energy functional,
\[
	\mathcal{E}_{h}^{(k)} \equiv \int_\Omega   \sum_{i=1}^N e^{\eta_{i,h}^{(k)}} (\eta_{i,h}^{(k)} -1) + \frac{\epsilon}{2}\left| \nabla\phi_h^{(k)} \right|^2\, dx
									+ \frac{1}{2}\int_{\Gamma_R} \mathcal{I}_h\big(\kappa\big(\phi_h^{(k)}\big)^2\big)\, ds.
\]
The first term in \eref{fe-telescope} yields a telescoping sum; a Gr\"onwall argument leads to
\begin{multline*}
	\max_{1\le j \le m} \left[\mathcal{E}_{h}^{(j)} + \sum_{i=1}^N q_i \big(e^{\eta_{i,h}^{(j)}}, \phi_{h,D} \big) \right]
		+ \sum_{j=1}^m \Delta t_j \sum_{i=1}^N \int_{\Omega} D_i e^{\eta_{i,h}^{(j)}} \left|\nabla\left(\eta_{i,h}^{(j)} + q_i \phi_h^{(j)} \right) \right|^2\, dx \\
	\le \mathcal{E}_{h}^{(0)} + \sum_{i=1}^N q_i \big(e^{\eta_{i,h}^{(0)}}, \phi_{h,D} \big).
\end{multline*}

To complete the proof, the conservation of mass bounds
\begin{multline}\label{voltage bound}
	\sum_{i=1}^N q_i \big(e^{\eta_{i,h}^{(0)}}-e^{\eta_{i,h}^{(k)}}, \phi_{h,D} \big)
		\le 2 \sum_{i=1}^N \big\| q_i e^{\eta_{i,h}^{(0)}} \big\|_{\mathcal{L}^1(\Omega)} \| \phi_{h,D} \|_{\mathcal{L}^\infty}
		\\\le 2N \big(\max_{1\le i \le N} \big\| q_i e^{\eta_{i,h}^{(0)}} \big\|_{\mathcal{L}^1(\Omega)}\big) \| \phi_{h,D} \|_{\mathcal{L}^\infty},
\end{multline}
where $\big\| q_i e^{\eta_{i,h}^{(0)}} \big\|_{\mathcal{L}^1(\Omega)}$ is directly proportional to the ionic mass, determined by the initial condition.
The estimate, 
\[
	\| \phi_{h,D} \|_{\mathcal{L}^\infty} \le C_\infty\max_{x\in\Gamma_D}|\mathcal{I}_h(\delta V)|,
\] 
follows from  Lemma \ref{discrete weak maximum principle}.
In the case where $\delta V \equiv 0$ or $\Gamma_D = \emptyset$, it is clear that $\phi_{h,D} \equiv 0$ so that this term vanishes altogether.
\end{proof}

To conclude this section, one important remark is in order.
The inequality of this energy estimate is a consequence of only two aspects of the discretization;
first, the time discretization satisfies \eref{fe-bound-ii} and \eref{fe-bound-iv} only with an inequality,
whereas the semi-discrete solution (continuous in time) satisfies these bounds with equality.
The only other inequality in the proof of Theorem \ref{discrete electrostatic stability} is used to bound non-homogeneous Dirichlet boundary conditions.
As a matter of fact, in the semi-discrete case with homogeneous Dirichlet boundary conditions (or no Dirichlet boundary conditions),
the finite element solution satisfies the energy estimate with equality.

\section{Electrokinetics}

Electrokinetic systems combine effects of electrostatic systems coupled with incompressible fluid flow.
The model equations studied here couple the PNP equations with the incompressible Navier-Stokes (NS) equations.
This system of equations models electrokinetic phenomena such as electroosmosis, electrophoresis, streaming potentials, 
electrowetting, and many other phenomena where charged particles and fluids interact \cite{ericksonli02,karnikdaiguji07,li2004electrokinetics,shin2005mixing}.
Some analysis for this system in the continuous case is given in \cite{ryham09}.
The equations governing the electrokinetic system seek a solution, $\eta_1,\ldots,\eta_N, \phi, \vec u$, and $p$, such that
\begin{align}
	-\nabla\cdot(\epsilon\nabla \phi)	&=	 \sum_{i=1}^N q_i e^{\eta_i},		\label{ek-poisson}\\
	\frac{_\partial}{^{\partial t}}e^{\eta_i}	&=	\nabla \cdot \big( D_i e^{\eta_i} \nabla (\eta_i + q_i \phi ) - e^{\eta_i} \vec u \big),	& i=1,\ldots,N,\label{ek-np}\\
	\rho_f \big( \vec u_t + ( \vec u \cdot \nabla ) \vec u \big) + \nabla p &= \nabla \cdot \big( 2\mu \varepsilon( \vec u) \big) - \sum_{i=1}^N q_i e^{\eta_i}\nabla \phi,	
						\label{cont continuity eqn}\\
	\nabla \cdot \vec u	&= 0, \label{cont div free}
\end{align}
on $\Omega\times(0,T]$, where $\varepsilon(\cdot)$ denotes the symmetrized vector gradient,
\[
	\varepsilon(\vec u) = \frac12\big( \nabla \vec u + (\nabla \vec u)^T \big).
\]
The initial conditions for this system are
\begin{align*}
	\eta_i(x,0)	=	\eta_{i,0}(x),	&&	-\nabla\cdot(\epsilon\nabla \phi(x,0))	&=	 \sum_{i=1}^N q_i e^{\eta_{i,0}(x)}, 	\\ 
	\vec u(x,0)	=	\vec u_{0}(x), 	&&	p(x,0) &= p_0(x),		& \mathrm{for}\ x\in \Omega.
\end{align*}
Equations \eref{ek-poisson} and \eref{ek-np} come directly from the PNP model,
where an additional coupling term in \eref{ek-np} models a kinetic force from the fluid flow described by the Navier-Stokes equations.
Equations \eref{cont continuity eqn} and \eref{cont div free} are the usual NS equations for an incompressible fluid.
In equation \eref{cont continuity eqn}, the electrostatic force $\sum_{i=1}^N q_i e^{\eta_i}\nabla\phi$ models the effects of the PNP on the fluid.

The boundary conditions considered for the PNP variables remain the same as the previous section \eref{no-ion-flux}--\eref{phi bcs}. 
The Navier-Stokes boundary conditions are assumed to be some combination of no-flux and no-slip boundary conditions,
\begin{align*}
						\vec u\cdot \vec n 		&= 0,	&\mathrm{on}\ \Gamma_{\mathrm{no-flux}} \subseteq \partial\Omega,\\
	\big(\mu\varepsilon( \vec u) \vec n\big)\cdot \vec t	&= 0,	&\mathrm{on}\ \Gamma_{\mathrm{no-flux}},\\
									\vec u	&= \vec 0,	&\mathrm{on}\ \Gamma_{\mathrm{no-slip}}\equiv \partial\Omega\setminus\Gamma_{\mathrm{no-flux}}.
\end{align*}
Due to the incompressibility condition on the fluid velocity \eref{cont div free}, the solution satisfies $\varepsilon (\vec u ) = \nabla \vec u$,
which is commonly used to represent the viscosity term in the continuity equation \eref{cont continuity eqn}.
This identity does not hold, however, for general $\vec s \in [\mathcal{H}^1(\Omega)]^d$, 
so one must take care when using the divergence theorem to write the PNP-NS system in weak form; namely,
for $\vec s =\vec 0$ on $\Gamma_\mathrm{no-slip}$,
\[
	-\big( \nabla \cdot \big( 2\mu \varepsilon( \vec u) \big), \vec s \big) = \big( 2 \mu \varepsilon(\vec u), \varepsilon(\vec s) \big).
\]
In the special case when $\vec u, \vec s \equiv \vec 0$ on $\partial\Omega$, 
the right side reduces to $\big( \mu \nabla \vec u, \nabla \vec s \big)$.

The corresponding energy law for this system is given by
\begin{multline}
	\frac{d}{dt} \left\{ \int_\Omega   \frac{\rho_f}{2}|\vec{u}|^2 + \sum_{i=1}^N \rho_i (\log \rho_i -1) + \frac{\epsilon}{2}\left| \nabla\phi \right|^2 \, dx
			+ \int_{\Gamma_R} \frac{\kappa}{2} \left| \phi \right|^2\, ds \right\}	\\
		=	- \int_\Omega  \frac\mu2 |\varepsilon(\vec u)|^2 + \sum_{i=1}^N{D_i} \rho_i \left| \nabla \left( \log\rho_i +  q_i \phi \right)\right|^2\ dx.  \label{PNPNS energy law}
\end{multline}
The terms in the energy law relating to the NS variables are critically hinged on specific mathematical structures of the NS system: 
in particular, the divergence-free property of the fluidic velocity plays a significant role in the cancelation of the cross-terms between the PNP and NS systems.
As a result, the discrete solution must satisfy the divergence-free property on every subdomain of $\Omega$.
This can be accomplished in several ways, using higher order elements or locally discontinuous Galerkin (DG) approximations \cite{ABMXZ14,CKS05}, for example.
For many practical applications, solutions using higher order elements may be prohibitively expensive to compute;
the discussion below primarily considers DG approximations for the NS variables.

To define the weak solution to the NS equations, let 
\begin{align*}
	Q &\equiv \mathcal{L}_2(\Omega),	\\
	S &\equiv \{ \vec s \in [\mathcal{H}^1(\Omega)]^d \, |\, \vec u\cdot \vec n =  0\ \mathrm{on}\ \Gamma_{\mathrm{no-flux}},
							\quad  \vec u = \vec 0\ \mathrm{on}\ \Gamma_{\mathrm{no-slip}}	\},\\
	\mathcal{H}(\mathrm{div};\Omega) 		&\equiv \{ \vec s \in [\mathcal{L}_2(\Omega)]^d\, |\, \nabla \cdot \vec s \in \mathcal{L}_2(\Omega) \},	\\
	\mathcal{H}_0(\mathrm{div};\Omega) 	&\equiv \{ \vec s \in \mathcal{H}(\mathrm{div};\Omega) \,|\, \vec s \cdot \vec n = 0\ \mathrm{on}\ \partial \Omega \},	\\
	\mathcal{H}_{D,0}(\mathrm{div};\Omega) 	&\equiv \{ \vec s \in \mathcal{H}_0(\mathrm{div};\Omega) \,|\, \vec s = \vec 0\ \mathrm{on}\ \Gamma_{\mathrm{no-slip}} \},	\\
	\mathcal{H}_0(\mathrm{div}^0;\Omega) 	&\equiv \{ \vec s \in \mathcal{H}_0(\mathrm{div};\Omega) \,|\, \nabla \cdot \vec s \equiv 0\ \mathrm{in}\ \Omega \}.
\end{align*}
The weak solution of the NS equations is $(\vec u, p) \in S\times Q$ satisfying
\begin{align*}
	D_t ( \vec u; \vec u, \vec s ) + A( \vec u, \vec s ) + B( \vec s, p )	&= ( \vec f, \vec s ),	&\mathrm{for\ all}\ \vec s\in S,	\\
										B( \vec u, q )	&= 0,	&\mathrm{for\ all}\ q \in Q,
\end{align*}
where $\vec f \in [\mathcal{L}_2(\Omega)]^d$ and 
\begin{align*}
	D_t ( \vec w; \vec u, \vec s )	&\equiv \rho_f ( \vec u_t, \vec s ) 
				+ \frac{\rho_f}{2} \big( (\vec w\cdot \nabla) \vec u, \vec s \big),	\\
	A(\vec u, \vec s)	&\equiv \big( 2\mu \varepsilon( \vec u), \varepsilon( \vec s) \big),	\\
	B(\vec u, q)		&\equiv -( \nabla \cdot \vec u, q ).
\end{align*}

The well-posedness of the weak formulation can be demonstrated using Babu\v{s}ka-Brezzi theory \cite{brezzifortin91},
where a Korn inequality must be established, as in the following lemma, which comes directly from \cite{ABMXZ14}.

\begin{lemma}\label{korn inequality}
Let $\Omega \subset \Re^d, d=2,3$ be a polygonal or polyhedral domain. Then, there exists a positive constant $C_{K}$
(depending on the domain through its diameter and shape) such that
\begin{equation} \label{korn bound}
	| \vec s |_{1} \le C_{K} \| \varepsilon (\vec s) \|_0,	\qquad\mathrm{for\ all}\ \vec s \in S.
\end{equation}
\end{lemma}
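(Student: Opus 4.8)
The plan is to reduce the seminorm estimate \eref{korn bound} to the classical second Korn inequality and then strip off a lower-order term by a compactness argument that uses the boundary conditions built into $S$. Throughout, $\|\cdot\|_0$ denotes the $\mathcal{L}_2(\Omega)$ norm, $|\cdot|_1$ the $\mathcal{H}^1$ seminorm, and $\|\cdot\|_1$ the full $\mathcal{H}^1$ norm.

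First I would establish the full (second) Korn inequality on all of $[\mathcal{H}^1(\Omega)]^d$, namely that there is a constant $C$ with $\|\vec s\|_1 \le C\big(\|\varepsilon(\vec s)\|_0 + \|\vec s\|_0\big)$ for every $\vec s \in [\mathcal{H}^1(\Omega)]^d$. The analytic content is the pointwise identity expressing second derivatives of the components of $\vec s$ through first derivatives of the strain,
\[
	\partial_k\partial_l s_j = \partial_l \varepsilon_{jk}(\vec s) + \partial_k \varepsilon_{jl}(\vec s) - \partial_j \varepsilon_{kl}(\vec s),
\]
combined with the Ne\v{c}as (Lions) inequality $\|v\|_0 \le C\big(\|v\|_{-1} + \|\nabla v\|_{-1}\big)$, valid on the polygonal or polyhedral $\Omega$. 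Applying the latter to each component $\partial_j s_k$ bounds $\|\partial_j s_k\|_{-1}$ by $\|\vec s\|_0$ and, via the identity, $\|\nabla\partial_j s_k\|_{-1}$ by $\|\varepsilon(\vec s)\|_0$; summing over $j,k$ yields control of $\|\nabla \vec s\|_0$ and hence the second Korn inequality. I expect this step to be the main obstacle: the Ne\v{c}as inequality is the genuinely nonelementary ingredient, and its validity is exactly where the regularity of $\partial\Omega$ enters.

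Second I would remove the term $\|\vec s\|_0$ on $S$ by a Peetre--Tartar / contradiction argument. The kernel of $\varepsilon$ consists precisely of the infinitesimal rigid motions $\vec s(x) = \vec a + B x$ with $B^{T} = -B$, so I would first verify that the conditions defining $S$ - in particular $\vec s = \vec 0$ on a non-degenerate $\Gamma_{\mathrm{no-slip}}$ - force the only rigid motion in $S$ to be $\vec s \equiv \vec 0$ (an affine field vanishing on a facet of positive $(d-1)$-measure vanishes identically). Assuming \eref{korn bound} fails, I extract $\vec s_n \in S$ with $|\vec s_n|_1 = 1$ and $\|\varepsilon(\vec s_n)\|_0 \to 0$. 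The second Korn inequality bounds $\{\vec s_n\}$ in $\mathcal{H}^1(\Omega)$, so by Rellich a subsequence converges weakly in $\mathcal{H}^1$ and strongly in $\mathcal{L}_2$ to some $\vec s \in S$ with $\varepsilon(\vec s) = 0$; by the kernel-triviality just checked, $\vec s = \vec 0$. Finally, applying the second Korn inequality to the differences $\vec s_n - \vec s_m$ shows $\{\vec s_n\}$ is Cauchy in $\mathcal{H}^1$, since both the strain terms and the $\mathcal{L}_2$ terms tend to zero; hence $\vec s_n \to \vec 0$ in $\mathcal{H}^1$, contradicting $|\vec s_n|_1 = 1$. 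This produces the constant $C_K$, which depends on $\Omega$ only through the constant in the second Korn inequality and the geometry entering the compactness step.
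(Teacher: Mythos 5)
The paper never proves this lemma at all: it is imported verbatim from \cite{ABMXZ14} (``which comes directly from \cite{ABMXZ14}''), so there is no internal proof to compare against and your proposal must stand on its own. Its skeleton is the standard route --- the second Korn inequality obtained from the strain identity plus the Ne\v{c}as inequality (valid on polygonal/polyhedral, hence Lipschitz, domains), followed by a Peetre--Tartar compactness argument to remove the $\mathcal{L}_2$ term --- and that part is sound, including the Cauchy-sequence trick at the end.

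The gap is in the kernel-triviality step. A minor point first: your parenthetical claim that an affine field vanishing on a facet of positive $(d-1)$-measure vanishes identically is false (take $\vec s(x)=(x_3,0,0)$ and the facet $\{x_3=0\}$); the correct claim is that an \emph{infinitesimal rigid motion} $\vec s=\vec a+Bx$ with $B^T=-B$ vanishing on such a facet is zero, which holds precisely because $B$ is skew. The serious issue is that you only handle the case where $\Gamma_{\mathrm{no-slip}}$ has positive measure. The paper allows $\Gamma_{\mathrm{no-flux}}=\partial\Omega$, i.e.\ $\Gamma_{\mathrm{no-slip}}=\emptyset$; indeed the cancellation of the PNP--NS cross terms in Theorem \ref{discrete electrokinetic stability} uses $\vec u_h\cdot\vec n=0$ on all of $\partial\Omega$, so pure slip is the relevant configuration, not an edge case. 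There your argument gives nothing, and you must instead show that a rigid motion satisfying $(\vec a+Bx)\cdot\vec n=0$ on $\partial\Omega$ is zero. This is exactly where the hypothesis ``polygonal or polyhedral'' earns its keep: on each flat facet the affine function $(\vec a+Bx)\cdot\vec n$ vanishes identically, so $Bt\cdot\vec n=0$ for every tangent $t$; skewness then forces $B=0$ (in $d=2$ directly, in $d=3$ because the associated rotation vector must be parallel to every facet normal, and a bounded polyhedron has non-parallel facets), after which $\vec a\cdot\vec n=0$ for all facet normals gives $\vec a=\vec 0$. This step genuinely fails on smooth axisymmetric domains --- on a disk or a cylinder the rotation about the axis lies in $S$ and has $\varepsilon\equiv0$, so \eref{korn bound} is simply false there --- hence no proof that uses polyhedrality only through the Ne\v{c}as inequality, as yours does, can be complete.
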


\subsection{The discrete formulation}
Recall that $\mathcal{T}_h$ denotes the finite element mesh on $\Omega$ and let $\mathcal{E}_h$ denote the set of interior element facets.
The broken $\mathcal{L}_2$ and $\mathcal{H}^1$ inner-products and norms are defined in the usual way
\[
	(p, q)_{\mathcal{T}_h} \equiv	\sum_{\tau \in \mathcal{T}_h} ( p, q )_{\tau},	\quad
	\| q \|_{0,\mathcal{T}_h} \equiv	( q, q )_{\mathcal{T}_h}^{1/2},	\quad\mathrm{and}\quad
	| s |_{1,\mathcal{T}_h} \equiv	( \nabla s, \nabla s )_{\mathcal{T}_h}^{1/2},
\]
for $p,q \in \mathcal{L}_2(\Omega)$ and 
$s \in \mathcal{H}^1(\mathcal{T}_h) \equiv \{ v \in \mathcal{L}_2(\Omega)\, |\ \, v|_{\tau} \in \mathcal{H}^1(\tau)\ \mathrm{for\ all}\ \tau \in \mathcal{T}_h \}.$

Let $w \in \mathcal{H}^1(\mathcal{T}_h)$, $\vec s \in [\mathcal{H}^1(\mathcal{T}_h)]^d$, $\sigma \in [\mathcal{H}^1(\mathcal{T}_h)]^{d\times d}$
denote a scalar, vector, and rank-two tensor field, respectively.
These fields are $\mathcal{H}^1$-regular within each element, though inter-element continuity is not assumed.
Fix $e \in \mathcal{E}_h$, where $e = \tau_+ \cap \tau_-$.
Denote the outward unit normal vectors of $\tau_+$ and $\tau_-$ by $\vec n_+$ and $\vec n_-$, respectively;
the averages across $e$ on internal facets are defined by
\[
	\{ w \} \equiv \frac12(w_+ + w_-),	\quad	\{ \vec s \} \equiv \frac12(\vec s_+ + \vec s_-),	\quad\mathrm{and}\quad	\{\sigma\} \equiv \frac12(\sigma_+ + \sigma_-),
\]
and given by their traces on the boundary facets;  the jumps across internal facets are given by
\begin{align*}
	[\![ w ]\!] &\equiv w_+ \vec n_+ + w_-\vec n_-,	&[ \vec s ] &\equiv \vec s_+ \cdot \vec n_+ + \vec s_- \cdot \vec n_-,\\
	[\![ \vec s ]\!] &\equiv \vec s_+ \otimes \vec n_+ + \vec s_- \otimes \vec n_-,
	&[ \sigma ] &\equiv \sigma_+ \vec n_+ + \sigma_- \vec n_-,
\end{align*}
and $[\![ w ]\!] = w \vec n,\	[ \vec s ] = \vec s \cdot \vec n,\ [\![ \vec s ]\!] = \vec s \otimes \vec n,$ and $[ \sigma ] = \sigma \vec n$ on boundary facets.
where the subscripts on the functions are equipped with their natural meanings of restriction to the element $\tau_+$ or $\tau_-$.
An inner-product defined over the inter-element facets is defined
\[
	\langle w, v \rangle_{\mathcal{E}_h}	\equiv \sum_{e\in\mathcal{E}_h}\int_e w(s) v(s) \,ds.
\]
Using the facet average and jump notation, the following identities are readily verified by direct computation: for $\vec s \cdot \vec n=0$ on $\partial \Omega$,
\[
	\sum_{\tau\in\mathcal{T}_h} \int_{\partial\tau} ( \vec s \cdot \vec n_\tau ) w\, ds	
		=	\langle [\![ w ]\!], \{\vec s\} \rangle_{\mathcal{E}_h}+ \langle [\vec s],\{w\}\rangle_{\mathcal{E}_h},
\]
and
\[
	\sum_{\tau\in\mathcal{T}_h} \int_{\partial\tau} ( \sigma \vec n_\tau ) \cdot \vec s\, ds	=	
				\langle [\![ \vec s ]\!] , \{\sigma\}\rangle_{\mathcal{E}_h} + \langle[ \sigma ],\{\vec s\}\rangle_{\mathcal{E}_h}.
\]

To preserve the local divergence-free property of the fluid velocity, 
nonconforming finite elements are useful for assigning degrees of freedom aimed at preserving this property instead of conforming to the continuous spaces.
We require the finite element space for the pressure $Q_h \subset Q$ and $S_h \subset [\mathcal{H}^1(\mathcal{T}_h)]^d \cap \mathcal{H}_{D,0}(\mathrm{div})$,
where $S_h \not\subset S$, in general.
While it is not necessary that $S_h$ conforms to $S$, several constraints are imposed on the finite element pair $S_h\times Q_h$ 
to ensure well-posedness of the discrete problem.
First, it is required that
\begin{equation}
	\nabla \cdot S_h \subseteq Q_h,	\label{div conforming}
\end{equation}
and, second, that there exists for each $q_h \in Q_h$ a corresponding $\vec s_h \in S_h$ such that 
\begin{equation}	\label{poincare}
	\nabla \cdot s_h = q_h	\quad\mathrm{and}\quad	\| s_h \|_0 \le c_P \| q_h \|_0,
\end{equation}
where $c_P>0$ is a Poincar\'e constant that depends on $\Omega$ in general, but not on $q_h$.
Requirements \eref{div conforming} and \eref{poincare} together imply that $\nabla \cdot S_h = Q_h$.
The final requirement for well-posedness is the existence of a local interpolation operator, $\Pi_\tau: S|_{\tau} \rightarrow S_h|_{\tau}$, 
for each element $\tau \in \mathcal{T}_h$ such that
\begin{equation} \label{interpolation}
	| \Pi_\tau \vec s |_{1,\tau} \le C_{0} | \vec s |_{1,\tau}		\quad\mathrm{and}\quad
	\| ( I - \Pi_\tau ) \vec s \|_{0,\tau} \le C_{1} h_\tau | \vec s |_{1,\tau},
\end{equation}
with $h_\tau = \mathrm{diam}(\tau)$.
This local interpolation property is extended over $\mathcal{T}_h$, to yield an interpolation operator, $\Pi_{S_h}: S\rightarrow S_h$.

Some well-studied finite element pairs satisfying \eref{div conforming}--\eref{interpolation} are the \emph{Raviart-Thomas} elements,
\emph{Brezzi-Douglas-Marini} elements, and the \emph{Brezzi-Douglas-Fortin-Marini} elements all of degree $k\ge1$.
Furthermore, as all of these elements are div-conforming, they have continuous normal components across inter-element facets, 
which, loosely speaking, ``reduces'' the discontinuity of the finite element space, requiring simpler penalty functions in the discontinuous formulation.
This additional continuity also plays a role in the cancellation of the PNP-NS cross terms and is commented upon in the proof of Theorem \ref{discrete electrokinetic stability}.

The discrete formulation of the NS equations given by: find $( \vec u_h^{(j)}, p_h^{(j)} ) \in S_h\times Q_h$ such that
\begin{align}	\label{discrete continuity}
	D_{h,t} \big( \vec u_h^{(j)}; \vec u_h^{(j)}, \vec s_h \big) + A_h\big( \vec u_h^{(j)}, \vec s_h \big) + B_h\big( \vec s_h, p_h^{(j)} \big)
								&= \frac{\rho_f}{\Delta t_j} \big( \vec u_h^{(j-1)}, \vec s_h \big) + \big( \vec f(t_j), \vec s_h \big),	&\mathrm{for\ all}\ \vec s_h\in S_h,	\\
	B_h \big( \vec u_h^{(j)}, q_h \big)	&= 0,	&\mathrm{for\ all}\ q_h \in Q_h,	\label{discrete incompressibility}
\end{align}
for $j=1,\ldots, m$, where initial conditions are defined by projection of the initial values, 
$\vec u_h^{(0)} = \Pi_{S_h} \vec u_0$ and $( p_h^{(0)}, q_h ) = ( p_0, q_h )$ for all $q_h \in Q_h$.

The forms used to define the discrete solution are given by
\begin{align*}
	D_{h,t} \big( \vec w_h; \vec u_h, \vec s_h \big)	&\equiv \frac{\rho_f}{\Delta t_j} ( \vec u_h, \vec s_h ) 
				- \frac{\rho_f}{2} \big( (\vec w_h\cdot \nabla) \vec s_h, \vec u_h \big) 
				+ \sum_{\tau\in\mathcal{T}_h} \int_{\partial \tau} (\vec w_h\cdot \vec n_\tau) (\vec{u}_h^w\cdot \vec s_h)\,ds,	\\
	A_h(\vec u_h, \vec s_h)	&\equiv \big( 2\mu \varepsilon ( \vec u_h ), \varepsilon ( \vec s_h ) \big)_{\mathcal{T}_h} 
							-  \langle 2\mu \{ \varepsilon (\vec u_h) \},[\![\vec s_h]\!]\rangle_{\mathcal{E}_h} 
										- \langle 2\mu[\![ \vec u_h ]\!], \{\varepsilon (\vec s_h)\} \rangle_{\mathcal{E}_h}\\
						&\qquad + \alpha \sum_{e\in\mathcal{E}_h} h_e^{-1} \int_e \mu[\![ \vec u_h ]\!]:[\![ \vec s_h ]\!]\,ds,\\
	B_h(\vec u_h, q_h)	&\equiv -( \nabla \cdot \vec u_h, q_h )_{\mathcal{T}_h}.
\end{align*}
These forms are quite standard in the DG literature, though some terms and important properties remain to be specified.

The discrete kinematic derivative term, $D_{h,t}: S_h\times S_h\times S_h\rightarrow \Re$, is defined using the upwind flux, $\vec{u}_h^w$, given by
\[
	\vec{u}_h^w = \lim_{\delta\rightarrow0^+}\vec u_h\big( x-\delta \vec w_h(x) \big).
\]
This definition yields coercivity, summarized by the standard identity \cite{CKS05}:
\begin{equation} \label{kinetic coercivity}
	D_{h,t}(w_h;  \vec u_h, \vec u_h)	= \frac12\sum_{e\in \mathcal{E}_h} \int_e | \vec w_h\cdot \vec n | \big| [\![ \vec u_h ]\!] \big|^2 \,ds,
\end{equation}
where $\vec n$ denotes either unit normal vector to the facet $e$.

The bilinear forms, $A_h$ and $B_h$, are a standard description for a DG discretization of Stokes' equations 
and are motivated by the definition of the weak derivative followed by applying the divergence theorem element-wise.
The parameter, $\alpha > 0$, penalizes discontinuities of the solution across element interfaces and must be chosen to be sufficiently large.

Since the finite element space, $S_h$, is div-conforming \eref{div conforming}, 
equation \eref{discrete incompressibility} implies that $\nabla \cdot \vec u_h = 0$ on each element $\tau \in \mathcal{T}_h$.
Another useful property inherited from \eref{div conforming} is that all $\vec s_h \in S_h$ have continuous normal components across element edges; 
namely, letting $\vec n$ and $\vec t$ denote the normal and tangent unit vectors, respectively, on each edge, $e \in \mathcal{T}_h$, gives
\[
	\vec s_h(x) = (\vec s_h \cdot \vec n ) \vec n + (\vec s_h \cdot \vec t\,) \vec t	= \vec s_h^{\,n}(x)  + \vec s_h^{\,t}(x),
\]
and $[\![ \vec s_h^{\,n} ]\!] = 0$ on each edge.
As a result, it holds that
\[
	 \int_e [\![ \vec s_h ]\!] : \sigma \, ds = \int_e [\![ \vec s_h^{\,t} ]\!] : \sigma \, ds \quad\mathrm{for\ any}\ \sigma \in [\mathcal{H}^1(\Omega)]^{d\times d}.
\]
Using this result, the coercivity of the kinematic derivative term, $D_{h,t}$, reduces to
\[
	D_{h,t} \big( \vec w_h; \vec u_h, \vec u_h \big)	= \frac12\sum_{e\in \mathcal{E}_h} \int_e | \vec w_h\cdot \vec n | \big| [\![ \vec u_h^{\,t} ]\!] \big|^2 \,ds
\]
and, for $A_h$,
\begin{multline*}
	A_h(\vec u_h, \vec s_h)	\equiv \big( 2\mu \varepsilon ( \vec u_h), \varepsilon ( \vec s_h) \big)_{\mathcal{T}_h} 
							-  \langle 2\mu \{ \varepsilon (\vec u_h) \},[\![\vec s_h^{\,t}]\!]\rangle_{\mathcal{E}_h} 
							- \langle2 \mu[\![ \vec u_h^{\,t} ]\!], \{\varepsilon (\vec s_h)\} \rangle_{\mathcal{E}_h}\\
						 	+ \alpha \sum_{e\in\mathcal{E}_h} h_e^{-1} \int_e \mu[\![ \vec u_h^{\,t} ]\!]:[\![ \vec s_h^{\,t} ]\!]\,ds,
\end{multline*}
where only the tangent components along the element facets are penalized, and $B_h$ becomes
\[
	B_h (\vec s_h, q_h ) = ( \nabla \cdot \vec s_h, q_h )	\quad\mathrm{for\ all}\ \vec s_h \in S_h\ \mathrm{and}\ q_h\in Q_h.
\]

The energy norm for the discontinuous fluid velocity is defined by
\[
	\| \vec s \|_{DG}^2 = | \vec s |_{1,\mathcal{T}_h}^2 + | \vec s |_*^2,	\quad\mathrm{where}\quad
	| \vec s |_*^2 \equiv	\sum_{e \in \mathcal{E}_h} h_e^{-1} \| [\![ \vec s_t ]\!] \|_{0,e}^2.
\]
For any of the three finite element examples mentioned above,  one can establish the $\|\cdot\|_{DG}$-stability of the bilinear form, $A_h$,
meaning that there exists a positive constant, $\gamma$, such that
\begin{equation} \label{DG stability}
	A_h( \vec s_h, \vec s_h ) \ge \gamma \| \vec s_h \|_{DG}^2,	\quad\mathrm{for\ all}\ \vec s_h \in S_h \ (\mathrm{see}\, \cite{ABMXZ14}).
\end{equation}
This stability, together with an argument using fixed point iteration \cite{CKS05}, is used to verify the existence of a discrete solution for the NS equations
using this DG scheme.

\subsection{The discrete electrokinetic system}
Employing the discretization of the PNP system given in the previous section and the discretization of the NS system above,
the discrete solution to the electrokinetic system is defined by the finite element functions $\eta_{1,h}^{(j)}, \ldots, \eta_{N,h}^{(j)} \in W_h$,
$\phi_h^{(j)} \in V_{h,\Gamma_D}$, and $(\vec u_h^{(j)}, p_h^{(j)}) \in S_h\times Q_h$ satisfying
\begin{align}
	(\epsilon\nabla{\phi}_h^{(j)}, \nabla v_h) + \langle \kappa{\phi}_h^{(j)}, v_h \rangle_{R,h} 
								&=	 \sum_{i=1}^N q_i \big( e^{\eta_{i,h}^{(j)}},v_h \big) 	\label{PNPNS eqn i}\\
								&\quad+ \langle C^{(j)}, v_h \rangle_{R,h} + \langle  S^{(j)}, v_h \rangle_{\Gamma_{N}},\nonumber \\
	\frac{1}{\Delta t_j}\big( e^{\eta_{i,h}^{(j)}}, w_h \big) + \left( D_i e^{\eta_{i,h}^{(j)}}\nabla \big( \eta_{i,h}^{(j)}	+  q_i\phi_h^{(j)} \big),\nabla w_h \right) 
								&= \frac{1}{\Delta t_j}\big( e^{\eta_{i,h}^{(j-1)}}, w_h \big) 
									+ \left( e^{\eta_{i,h}^{(j)}} \vec u_h^{(j)}, \nabla w_h \right),\label{PNPNS mass cons}\\
	D_{h,t} \big( \vec u_h^{(j)}; \vec u_h^{(j)}, \vec s_h \big) + A_h\big( \vec u_h^{(j)}, \vec s_h \big) + B_h\big( \vec s_h, p_h^{(j)} \big)
								&= \frac{\rho_f}{\Delta t_j} \big( \vec u_h^{(j-1)}, \vec s_h \big) 
									- \sum_{i=1}^N q_i\big( e^{\eta_{i,h}^{(j)}}\nabla \phi_h^{(j)}, \vec s_h \big), \label{ns continuity}\\
	B_h \big( \vec u_h^{(j)}, q_h \big)	&= 0,	\label{pnpns discrete incompressibility}
\end{align}
for all $w_h \in W_h, v_h \in V_{h,0}, \vec s_h \in S_h, q_h \in Q_h$, and $j=1,\ldots, m$.
Initial conditions are prescribed by
\begin{align}
				\big(\eta_{i,h}^{(0)}, w_h \big)	&=	\big( \eta_{i,0}, w_h \big),	&\mathrm{for\ all}\ w_h \in W_h, \\
	\big(\epsilon\nabla\phi_h^{(0)},\nabla v_h \big)	+ \big\langle \kappa\phi_h^{(0)}, v_h \big\rangle_{R,h}
					 &=	\sum_{i=1}^N q_i\big( e^{\eta_{i,0}}, v_h \big),	\nonumber\\
					 &\quad + \langle C, v_h \rangle_{R,h} + \langle  S, v_h \rangle_{\Gamma_{N}} &\mathrm{for\ all}\ v_h \in V_{h,0},\\
							\vec u_h^{(0)}	&= \Pi_{S_h} \vec u_0,\\
					\big( p_h^{(0)}, q_h \big)	&= \big( p_0, q_h \big),	&\mathrm{for\ all}\ q_h \in Q_h,
\end{align}
where $\Pi_{S_h}: S \rightarrow S_h$ satisfies \eref{interpolation} locally.

The stability of the discrete solution of the electrokinetic system is given by the following theorem.
\begin{theorem} \label{discrete electrokinetic stability}
Suppose $\eta_{i,h}^{(j)} \in W_h, \phi_h^{(j)} \in V_{h,\Gamma_D}, \vec u_h^{(j)} \in S_h, p_h^{(j)}$ satisfy equations \eref{PNPNS eqn i}---\eref{pnpns discrete incompressibility},
where $S_h\times Q_h$ is one of the stable Stokes pairs as described above.
Furthermore, suppose the mesh satisfies one of the assumptions of Lemma \ref{discrete weak maximum principle}.
Then, the mass is conserved for each ion species,
\begin{equation*}
	\int_{\Omega} e^{\eta_{i,h}^{(j)}(x,t)}\,dx	=	\int_{\Omega} e^{\eta_{i,0}(x)}\,dx,		\quad\mathrm{for}\ i=1,\ldots,N,\quad j=1,\ldots,m,
\end{equation*}
and the energy estimate is satisfied,
\begin{multline}
	\max_{1\le j \le m} \left\{ \int_\Omega   \frac{\rho_f}{2}\big|\vec u_h^{(j)}\big|^2 + 
			\sum_{i=1}^N e^{\eta_{i,h}^{(j)}} (\eta_{i,h}^{(j)} -1) + \frac{\epsilon}{2}\left| \nabla\phi_h^{(j)} \right|^2\, dx
			+ \frac{1}{2}\int_{\Gamma_R}  \mathcal{I}_h\big(\kappa\big(\phi_h^{(j)}\big)^2\big)\, ds \right\}\\
			+\sum_{j=1}^m \Delta t_j \left[ \gamma \big\| \vec u_h^{(j)} \big\|_{DG}^2
			+ \int_\Omega  \sum_{i=1}^N{D_i} e^{\eta_{i,h}^{(j)}} \big| \nabla \big( \eta_{i,h}^{(j)} +  q_i \phi_h^{(j)} \big) \big|^2\ dx \right]\\
	\le  \int_\Omega   \frac{\rho_f}{2}\big|\vec u_h^{(0)}\big|^2 + \sum_{i=1}^N e^{\eta_{i,h}^{(0)}} (\eta_{i,h}^{(0)} -1) + \frac{\epsilon}{2}\big| \nabla\phi_h^{(0)} \big|^2\, dx
			+ \frac{1}{2}\int_{\Gamma_R}  \mathcal{I}_h\big(\kappa\big(\phi_h^{(0)}\big)^2\big)\, ds + C_1,
\end{multline}
where $C_1$ depends on the number of ion species, their initial masses, the electric permittivity coefficient, and $C_\infty$.
In the cases of no Dirichlet boundary conditions or a homogeneous Dirichlet boundary condition on $\phi_h$, the constant $C_1$ vanishes.
\end{theorem}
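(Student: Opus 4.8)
The plan is to mirror the proof of Theorem \ref{discrete electrostatic stability}, adding the Navier-Stokes contributions and showing that the PNP--NS coupling terms cancel exactly. Mass conservation is immediate: choosing $w_h \equiv 1$ in \eref{PNPNS mass cons}, the diffusion term drops as before and the new convective term $\big(e^{\eta_{i,h}^{(j)}}\vec u_h^{(j)}, \nabla 1\big)$ vanishes, so the argument of Theorem \ref{discrete electrostatic stability} carries over verbatim.

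For the energy estimate, I would test \eref{PNPNS mass cons} with $w_h = \eta_{i,h}^{(j)} + q_i\phi_h^{(j)} \in W_h$, sum over $i$, and test the momentum equation \eref{ns continuity} with $\vec s_h = \vec u_h^{(j)} \in S_h$, then add the two. The pure-PNP portion is treated identically to Theorem \ref{discrete electrostatic stability}: convexity of $f(\rho) = \rho(\log\rho - 1)$ gives the entropy difference as in \eref{fe-bound-ii}, and splitting $\phi_h^{(j)} = \phi_{h,0}^{(j)} + \phi_{h,D}$ and subtracting consecutive time-steps of \eref{PNPNS eqn i} produces the electrostatic energy increment as in \eref{fe-bound-iv}. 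On the fluid side, incompressibility \eref{pnpns discrete incompressibility} kills the pressure coupling $B_h(\vec u_h^{(j)}, p_h^{(j)})$; the coercivity identity \eref{kinetic coercivity} shows the convective and upwind-facet parts of $D_{h,t}(\vec u_h^{(j)}; \vec u_h^{(j)}, \vec u_h^{(j)})$ are nonnegative, leaving the mass contribution $\tfrac{\rho_f}{\Delta t_j}\|\vec u_h^{(j)}\|_0^2$; and \eref{DG stability} gives $A_h(\vec u_h^{(j)}, \vec u_h^{(j)}) \ge \gamma\|\vec u_h^{(j)}\|_{DG}^2$. Pairing the mass term with the right-hand datum $\tfrac{\rho_f}{\Delta t_j}(\vec u_h^{(j-1)}, \vec u_h^{(j)})$ and using $(\vec a - \vec b, \vec a) \ge \tfrac12(\|\vec a\|_0^2 - \|\vec b\|_0^2)$ yields the discrete kinetic-energy increment $\tfrac{\rho_f}{2\Delta t_j}(\|\vec u_h^{(j)}\|_0^2 - \|\vec u_h^{(j-1)}\|_0^2)$.

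The heart of the matter, and the step I expect to be the main obstacle, is the cancellation of the coupling terms. The test of \eref{PNPNS mass cons} contributes $\sum_i \big(e^{\eta_{i,h}^{(j)}}\vec u_h^{(j)}, \nabla(\eta_{i,h}^{(j)} + q_i\phi_h^{(j)})\big)$, while \eref{ns continuity} contributes $-\sum_i q_i\big(e^{\eta_{i,h}^{(j)}}\nabla\phi_h^{(j)}, \vec u_h^{(j)}\big)$. The potential pieces cancel at once, since $\big(e^{\eta_{i,h}^{(j)}}\vec u_h^{(j)}, \nabla\phi_h^{(j)}\big)$ and $\big(e^{\eta_{i,h}^{(j)}}\nabla\phi_h^{(j)}, \vec u_h^{(j)}\big)$ have identical integrands. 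For the remaining term I would use $e^{\eta_{i,h}^{(j)}}\nabla\eta_{i,h}^{(j)} = \nabla e^{\eta_{i,h}^{(j)}}$ to write it as $\sum_i(\vec u_h^{(j)}, \nabla e^{\eta_{i,h}^{(j)}})$ and integrate by parts element-wise. The volume terms vanish because $\nabla\cdot\vec u_h^{(j)} = 0$ on each $\tau$ (from \eref{div conforming} together with \eref{pnpns discrete incompressibility}); the interior-facet terms cancel because $e^{\eta_{i,h}^{(j)}}$ is continuous across facets (as $\eta_{i,h}^{(j)} \in W_h \subset \mathcal{H}^1$) while $\vec u_h^{(j)}$ has continuous normal trace (the div-conforming property of $S_h$); and the boundary facets contribute nothing since $\vec u_h^{(j)}\cdot\vec n = 0$ on $\partial\Omega$ under the no-flux and no-slip conditions. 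This is precisely where the local divergence-free property and the normal-continuity of the Stokes pair are indispensable.

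With the coupling eliminated, the combined inequality has the same telescoping-in-$j$ structure as \eref{fe-telescope}, now augmented by the kinetic-energy term in the energy functional and the $\gamma\|\vec u_h^{(j)}\|_{DG}^2$ term in the dissipation. Multiplying by $\Delta t_j$, summing over $j$, and invoking a Gr\"onwall argument reduce the problem to bounding the non-homogeneous Dirichlet contribution $\sum_i q_i\big(e^{\eta_{i,h}^{(0)}} - e^{\eta_{i,h}^{(j)}}, \phi_{h,D}\big)$, which is controlled exactly as in \eref{voltage bound} using conservation of mass and the discrete maximum principle of Lemma \ref{discrete weak maximum principle}. This furnishes the constant $C_1$, which vanishes when $\delta V \equiv 0$ or $\Gamma_D = \emptyset$, completing the estimate.
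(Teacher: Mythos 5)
Your proposal is correct and follows essentially the same route as the paper's own proof: identical test functions, the same treatment of the PNP portion via Theorem \ref{discrete electrostatic stability}, the same use of \eref{kinetic coercivity}, \eref{DG stability}, and incompressibility for the fluid terms, and the same cancellation of the coupling terms via element-wise integration by parts using the local divergence-free property, normal continuity of $S_h$, continuity of $e^{\eta_{i,h}^{(j)}}$, and the boundary condition $\vec u_h^{(j)}\cdot\vec n = 0$. The concluding telescoping/Gr\"onwall argument with the bound \eref{voltage bound} also matches the paper exactly.
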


\begin{proof}
The proof of Theorem \ref{discrete electrokinetic stability} closely follows that of Theorem \ref{discrete electrostatic stability},
in addition to \eref{DG stability} for the NS variables.
The only remaining terms are the cross terms between the PNP and NS systems, 
which cancel due to the strong divergence-free property of $\vec u_h^{(j)}$ and the continuity of the normal components across element edges.

The conservation of mass follows from choosing $w_h \equiv 1 \in W_h$ in equation \eref{PNPNS mass cons}, as in Theorem \ref{discrete electrostatic stability}.
Following the argument in the proof of Theorem \ref{discrete electrostatic stability} exactly gives
\begin{multline} \label{fe-telescope PNPNS i}
	\frac{ \left[\mathcal{E}_{h}^{(j)} + \sum_{i=1}^N q_i \big(e^{\eta_{i,h}^{(j)}}, \phi_{h,D} \big) \right]
		- \left[\mathcal{E}_{h}^{(j-1)} + \sum_{i=1}^N q_i \big(e^{\eta_{i,h}^{(j-1)}}, \phi_{h,D} \big) \right] }{ \Delta t_j}\\
	\le -\sum_{i=1}^N \int_{\Omega} D_i e^{\eta_{i,h}^{(j)}} \left|\nabla\left(\eta_{i,h}^{(j)} + q_i \phi_h^{(j)} \right) \right|^2\, dx
		+ \sum_{i=1}^N\left( e^{\eta_{i,h}^{(j)}} \vec u_h^{(j)}, \nabla \big(\eta_{i,h}^{(j)} + q_i \phi_h^{(j)} \big) \right)	\\
	= -\sum_{i=1}^N \int_{\Omega} D_i e^{\eta_{i,h}^{(j)}} \left|\nabla\left(\eta_{i,h}^{(j)} + q_i \phi_h^{(j)} \right) \right|^2\, dx
		+ \sum_{i=1}^N \left[ \left( \vec u_h^{(j)}, \nabla e^{\eta_{i,h}^{(j)}} \right) + q_i \left( e^{\eta_{i,h}^{(j)}}  \nabla \phi_h^{(j)}, \vec u_h^{(j)} \right)\right],
\end{multline}
where the discrete energy is recalled as
\[
	\mathcal{E}_{h}^{(k)} = \int_\Omega   \sum_{i=1}^N e^{\eta_{i,h}^{(k)}} (\eta_{i,h}^{(k)} -1) + \frac{\epsilon}{2}\left| \nabla\phi_h^{(k)} \right|^2\, dx
									+ \frac{1}{2}\int_{\Gamma_R}  \mathcal{I}_h\big(\kappa\big(\phi_h^{(k)}\big)^2\big)\, ds.
\]
Let $\zeta_h^{(j)} = \sum_{i=1}^N q_i e^{\eta_{i,h}^{(j)}} \in \mathcal{H}^1(\Omega)$.
Since $\vec u_h^{(j)}$ is strongly divergence free, has a continuous normal component across inter-element facets,
$\vec u_h^{(j)} \cdot \vec n = 0$ on $\partial \Omega$, and $\zeta_h^{(j)}$ is continuous,
\begin{multline} \label{div cancellation}
	\sum_{i=1}^N \left( \vec u_h^{(j)}, \nabla e^{\eta_{i,h}^{(j)}} \right) = \left( \vec u_h^{(j)}, \nabla \zeta_h^{(j)} \right)\\
			= \big( \nabla \cdot \vec u_h^{(j)}, \zeta_h^{(j)} \big)_{\mathcal{T}_h} 
				+ \sum_{\tau\in\mathcal{T}_h} \int_{\partial\tau} (\vec u_h^{(j)} \cdot \vec n_\tau) \zeta_h^{(j)}\,ds = 0.
\end{multline}
Then, combining \eref{fe-telescope PNPNS i} and \eref{div cancellation} provides the bound
\begin{multline}	\label{fe-telescope PNPNS ii}
	\frac{ \left[\mathcal{E}_{h}^{(j)} + \big({\zeta_{h}^{(j)}}, \phi_{h,D} \big) \right]
		- \left[\mathcal{E}_{h}^{(j-1)} + \big({\zeta_{h}^{(j-1)}}, \phi_{h,D} \big) \right] }{\Delta t_j}\\
	\le -\sum_{i=1}^N \int_{\Omega} D_i e^{\eta_{i,h}^{(j)}} \left|\nabla\left(\eta_{i,h}^{(j)} + q_i \phi_h^{(j)} \right) \right|^2\, dx
		+ \sum_{i=1}^N q_i \left( e^{\eta_{i,h}^{(j)}}  \nabla \phi_h^{(j)}, \vec u_h^{(j)} \right).
\end{multline}

For the NS terms, it follows from \eref{kinetic coercivity}
\begin{align}
	D_{t,h}\big(\vec u_h^{(j)};\vec u_h^{(j)}, \vec u_h^{(j)}) - \frac{\rho_f}{\Delta t_j} \big(\vec u_h^{(j-1)},\vec u_h^{(j)}\big)
		&=	\frac{\rho_f}{\Delta t_j} \big( \vec u_h^{(j)} - \vec u_h^{(j-1)}, \vec u_h^{(j)} \big)\\
		&\qquad +	\frac12\sum_{e\in \mathcal{E}_h} \int_e | \vec u_h^{(j)}\cdot \vec n | \big| \big[\!\!\big[ \big(\vec u_h^{(j)}\big)\big]\!\!\big] \big|^2 \,ds\label{discrete kinetic bound}\\
		&\ge \frac{\rho_f}{2\Delta t_j} \big( \big\|\vec u_h^{(j)}\big\|_0^2 - \big\| \vec u_h^{(j-1)} \big\|_0^2 \big), \nonumber
\end{align}
and, choosing $q_h = p_h^{(j)}$ in \eref{pnpns discrete incompressibility},
\begin{equation} \label{div free estimate}
	B_h\big(\vec u_h^{(j)}, p_h^{(j)}\big) = 0.
\end{equation}
Setting $\vec s_h = \vec u_h^{(j)}$ in \eref{ns continuity} and employing the bounds \eref{DG stability}, \eref{discrete kinetic bound}--\eref{div free estimate} gives
\begin{equation} \label{NS energy bound}
	\frac{\rho_f}{2\Delta t_j} \big( \big\|\vec u_h^{(j)}\big\|_0^2 - \big\| \vec u_h^{(j-1)} \big\|_0^2 \big) + \gamma \big\| \vec u_h^{(j)} \big\|_{DG}^2
		\le -\sum_{i=1}^N q_i \left( e^{\eta_{i,h}^{(j)}}  \nabla \phi_h^{(j)}, \vec u_h^{(j)} \right).
\end{equation}

Adding \eref{fe-telescope PNPNS ii} and \eref{NS energy bound} gives
\begin{multline}	\label{pnpns energy telescope}
	\frac{ \left[\frac{{\rho_f}}{2}\|\vec u_h^{(j)}\|_0^2+\mathcal{E}_{h}^{(j)} + \big({\zeta_{h}^{(j)}}, \phi_{h,D} \big) \right]
		- \left[\frac{{\rho_f}}{2}\|\vec u_h^{(j-1)}\|_0^2+\mathcal{E}_{h}^{(j-1)} + \big({\zeta_{h}^{(j-1)}}, \phi_{h,D} \big) \right] }{\Delta t_j}\\
	\le -\gamma \big\| \vec u_h^{(j)} \big\|_{DG}^2 -\sum_{i=1}^N \int_{\Omega} D_i e^{\eta_{i,h}^{(j)}} \left|\nabla\left(\eta_{i,h}^{(j)} + q_i \phi_h^{(j)} \right) \right|^2\, dx.
\end{multline}
A discrete Gr\"onwall argument and invoking \eref{voltage bound} gives the energy estimate.
\end{proof}

\section{Numerical experiments}

This section presents some numerical experiment that verify the viability, efficiency, and accuracy of computed solutions defined the proposed discretization in the above sections.
According to the discretizations in \S\S 2--3, a system of nonlinear elliptic equations must be solved at each time step.
While there are many approaches to solving such a system, two commonly used techniques for resolving the nonlinear behavior are fixed point iteration 
(often referred to as Gummel iteration in the semiconductor literature) and Newton methods.
For purposes of analysis, fixed point iteration is a very important tool, as convergence can be verified for more general problems;
however, as a practical matter, it is often difficult to establish the rate of convergence to the nonlinear solution for this approach.
This practical difficulty motivates the use of a quasi-Newton method for the experiments presented below, 
where the relative residual approaches zero super-linearly.
The nature of the model equations raise many issues concerning the numerical solver, such as resolving nonlinearities, 
upwinding schemes to preserve numerical stability, and describing the solver for the arising linear systems.
Accordingly, some of the details of the numerical solver are deferred to an upcoming publication \cite{solver}.

It is important to mention that the linearized equations resulting from a Newton-type approach lead to systems of linearized pdes that are potentially \emph{convection-dominated}.
This leads to potential algorithmic difficulties in preserving stability for the computed solution; 
so, some form of upwinding must be implemented to ensure accuracy.
The well-studied edge-averaged finite element (EAFE) method is proven to provide stable numerical solutions that do not display
spurious oscillatory behavior \cite{LZ12,XZ99}.
A point of emphasis here is that the nonlinear solution is stable, as verified by the energy estimates above,
thought the sequence of linearized equations are not necessarily stable.

A solver was implemented in C++ that leverages some existing functionality of the FEniCS 1.3.0 \cite{fenics} software package for
generating systems of linear algebraic equations corresponding to an elliptic pde.
Here, the elliptic pdes are the linearized pdes coming from Newton's method, with an EAFE approximation to improve stability.
Once the systems of algebraic equations are constructed, the Fast Auxilary Space Preconditioners (FASP) software package \cite{fasp}
is used to efficiently solve the resulting systems.

The first experiment presented here is designed to establish the rate of convergence for the PNP discretization at steady state;
since the discrete solution is defined using a method-of-lines approach, 
this experiment also verifies the convergence rate of the presented numerical scheme to the solution of the nonlinear elliptic equation at each time step.
Several PNP systems are solved, where the permittivity coefficient, $\epsilon$, is tested for decreasing values.
Testing the solver for small values of $\epsilon$ is important in many practical problems concerning semiconductors and biological applications,
where this coefficient is on the order of $10^{-4}$ to $10^{-8}$ after the system has been non-dimensionalized.
For this experiment, the equation
\begin{align*}
		-\nabla\cdot(\epsilon\nabla \phi)		&=	 e^{\eta_1} - e^{\eta_2} + f_0,	\\
	\frac{_\partial }{^{\partial t}} e^{\eta_1}	&=	\nabla \cdot \left( e^{\eta_1} \nabla(\eta_1 + \nabla \phi) \right) + f_1,	\\
	\frac{_\partial }{^{\partial t}} e^{\eta_2}	&=	\nabla \cdot \left( e^{\eta_2} \nabla(\eta_2  - \nabla \phi) \right) + f_2,
\end{align*}
is solved on the domain $\Omega = [-1,1]\times[-\frac{_1}{^2},\frac{_1}{^2}]\times[-\frac{_1}{^2},\frac{_1}{^2}]$, 
where $f_0, f_1, f_2$ are chosen so that the solution, for $x=(x_1,x_2,x_3)\in\Omega$, is 
\[
	\eta_1(x) = e^{\frac{\log10}{2}(x_1-1)},	\quad \eta_2(x) = e^{-\frac{\log10}{2}(x_1+1)},	
	\quad\mathrm{and}\quad	\phi(x) = -\frac{2\sinh(x_1)}{e-e^{-1}}.
\]
As this experiment is designed to test the numerical convergence to the nonlinear solution at steady state, 
Dirichlet boundary conditions are imposed at the ends of the domain $x_1 = \pm 1$.
The iteration count for convergence to the nonlinear solutions (determined by reducing the relative residual by a factor of $10^{-10}$) are reported in Table \ref{log density table},
along with the $\mathcal{H}^1$ semi-norm of the error, given by
\[
	\mathcal{H}^1\ \mathrm{semi\!-\!norm} = \left( | e^{\eta_1} - e^{\eta_{1,h}} |_1^2     +    | e^{\eta_2} - e^{\eta_{2,h}} |_1^2 + | \phi - \phi_{h} |_1^2 \right)^{1/2}.
\]

It is clear that the Newton iterates converge in a reasonable number of iterations (fewer than 10 in all cases),
which is encouraging for small values of $\epsilon$.
Additionally, the convergence rate is to be linear for all values of $\epsilon$ in Figure \ref{fig-convergence-rate} with respect to the mesh size.

\begin{table}
\begin{center} \begin{tabular}{| c || c | c || c | c || c | c || c | c | } \hline
	$\epsilon$	&	
	\multicolumn{2}{|c||}{$20\times10\times10$}	&	
	\multicolumn{2}{ c||}{$40\times20\times20$}	&	
	\multicolumn{2}{ c||}{$60\times30\times30$}	&	
	\multicolumn{2}{ c |}{$80\times40\times40$}
	\\\hline\hline
	1		&	7	& $2.65\times10^{-3}$		&	6	& $6.67\times10^{-4}$		&	6	& $2.97\times10^{-4}$		&	6	& $1.67\times10^{-4}$	\\
	$10^{-2}$	&	6	& $3.81\times10^{-3}$		&	6	& $9.80\times10^{-4}$		&	5	& $4.38\times10^{-4}$		&	5	& $2.47\times10^{-4}$	\\
	$10^{-4}$	&	5	& $7.03\times10^{-3}$		&	5	& $2.37\times10^{-3}$		&	5	& $1.20\times10^{-3}$		&	5	& $7.18\times10^{-4}$	\\
	$10^{-8}$	&	5	& $7.25\times10^{-3}$		&	9	& $2.61\times10^{-3}$		&	9	& $1.43\times10^{-3}$		&	9	& $9.34\times10^{-4}$	\\\hline
\end{tabular} \end{center}
\caption{The count of Newton iterates to decrease the initial residual by a factor of $10^{-10}$ and the $\mathcal H^1$ semi-norm of the error.} \label{log density table}
\end{table}

\begin{figure}
\begin{center}
	\includegraphics[scale=.25]{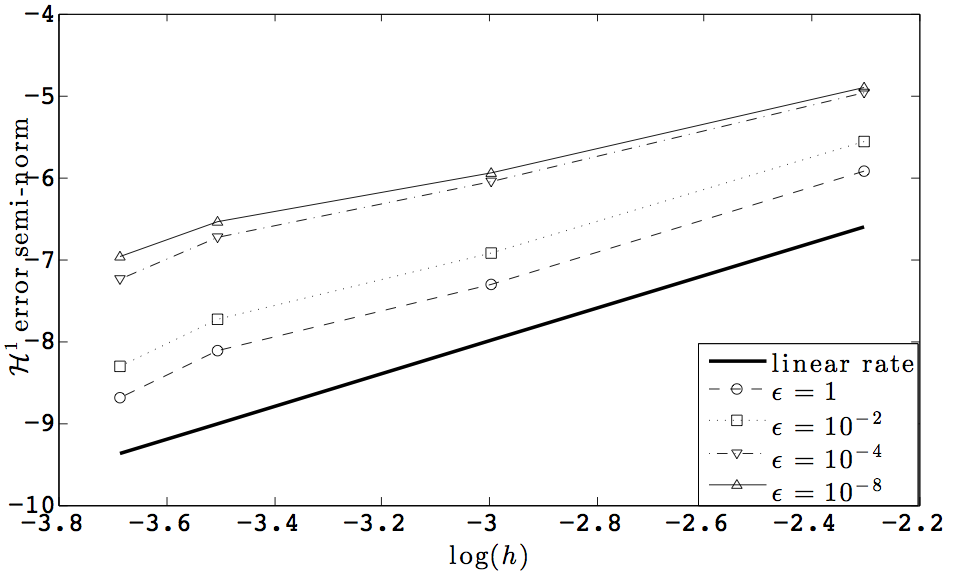}%{linear-convergence1.pdf}
\caption{The logarithm of the error measured in the $\mathcal{H}^1$ semi-norm, plotted against the logarithm of the element diameter.
The lines depict the log of the error for various values of $\epsilon$, where the thick line is a reference for linear convergence.}\label{fig-convergence-rate}
\end{center}	
\end{figure}

A second experiment validates that the energy estimate is satisfied.
While this property is certainly true for the theoretical finite element solution to the nonlinear problem,
it is important to verify that the numerical solution, computed by inexact iterative methods, preserves this property.
For this experiment, the domain is $\Omega = [-1,1]\times[-\frac{_1}{^{10}},\frac{_1}{^{10}}]\times[-\frac{_1}{^{10}},\frac{_1}{^{10}}]$.
The time domain is $(0,0.03]$, and a uniform time-step of length $\Delta t = \frac{1}{3000}$
For this problem, we solve the system defined by \eref{poisson}---\eref{NP}, with $N=2$, $\mu_1 = \mu_2 = D_1 = D_2 = 1, q_1=1, q_2=-1$, and $\epsilon = \frac{1}{100}$,
with no-flux boundary conditions imposed on the Nersnt-Planck equations and mixed homogeneous Dirichlet and inhomogeneous Neumann boundary conditions on $\phi$:
\begin{align*}
	\phi &= 0,							&&\mathrm{for}\ x_1 = \pm1,\\
	\epsilon\nabla\phi\cdot\vec n	&= 1,	&&\mathrm{for}\ x_1 \le 0\ \mathrm{and}\ x_3 = \frac{1}{10},	&\ \mathrm{or}\ 	x_1 \ge 0\ \mathrm{and}\ x_3 = -\frac{1}{10},\\
	\epsilon\nabla\phi\cdot\vec n	&= -1,	&&\mathrm{for}\ x_1 \le 0\ \mathrm{and}\ x_3 = -\frac{1}{10},&\ \mathrm{or}\ 	x_1 \ge 0\ \mathrm{and}\ x_3 = \frac{1}{10},\\
	\epsilon\nabla\phi\cdot\vec n	&= 0,	&&\mathrm{for}\ x_2  = \pm\frac{1}{10}.
\end{align*}
These boundary conditions model surface charges, of alternating charge, lining opposite sides of a channel along the $x_1$ direction and electrode contacts at the ends of the channel.
The experiment demonstrates for each time step that the discrete energy estimate,
\[
	\delta E^{(j)}  = \frac{\mathcal{E}_h^{(j)} - \mathcal{E}_h^{(j-1)}}{\Delta t} \le -\int e^{\eta_{1,h}} \big|\nabla(\eta_{1,h} + \phi_h) \big|^2 + e^{\eta_{2,h}} \big|\nabla(\eta_{2,h} - \phi_h) \big|^2\, dx 
	\equiv -\Delta^{(j)},
\]
is satisfied until the dissipation is below the tolerance of the nonlinear solver, \mbox{$\Delta^{(j)} < 10^{-8}$.}
To clearly illustrate that the energy estimate is satisfies, Figure \ref{fig-energy} displays the quantity $\log( -\delta E^{(j)}) - \log (\Delta^{(j)})$, 
which is positive when the energy estimate is satisfied.
\begin{figure}
\begin{center}
	\includegraphics[scale=.25]{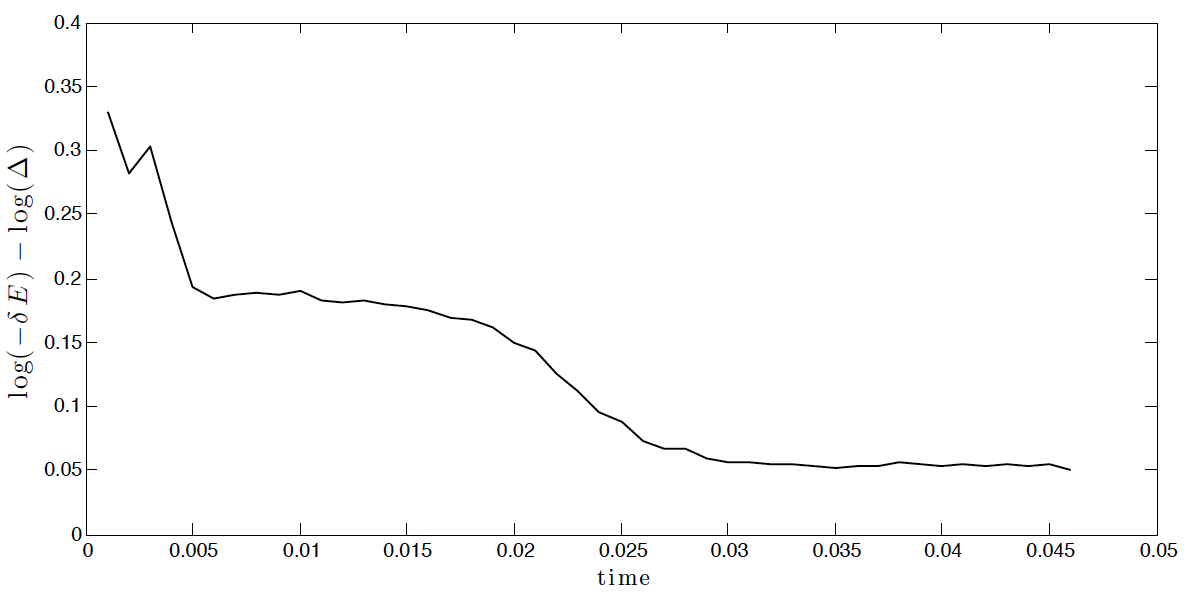}
\caption{The difference, $\log( -\delta E) - \log (\Delta)$, plotted over the time domain until convergence.}\label{fig-energy}
\end{center}	
\end{figure}

 % numerical experiments
%!TEX root = ./finite-element-approach-pnp.tex

\section{Concluding remarks}

In this paper, the energetic stability is established for the finite element solutions to the PNP equations and an electrokinetic model with an incompressible fluid,
with a minor extension to the case of inhomogeneous boundary conditions on the electrostatic potential.
This extension imposes some additional constraints on the finite element mesh so that a weak discrete maximum principle can bound the
energy introduced to the system by this inhomogeneous boundary condition.

This energy estimate for the finite element solutions mimics the energy law of the continuous solution to these models,
where a logarithmic transformation is a key ingredient to establishing the stability for the electrostatic terms and the divergence-free property 
of the discrete solution to the NS equations is essential to the stability of the fluidic variables, as well as the cancellation of cross terms between the two models.
Recall that this divergence-free property of the finite element fluidic velocity is a result of using a DG formulation of the NS equations.

A mathematical justification for the convergence is a matter of future work, 
though the experiments in this paper numerically demonstrate that convergence is obtained for various values of the permittivity coefficient in the Poisson equation.
Furthermore, the computed solution using a quasi-Newton scheme is also shown to satisfy the energy law for the PNP system.
The numerical solver for the PNP equations (with and without coupling to the NS equations) will be described in an upcoming publication \cite{solver}.

\bibliography{ref}
\bibliographystyle{siam}
\end{document}

%%%%%%%%%%%%%%%%%%%%%%%%%%%%%%%%%%%%%%%%%%%%%%%%%%%%%%%%%%%%%%%%%%%%%%%%%%%%%%